\pgfplotsset{compat=1.15}
\DeclareMathOperator{\Span}{span}
\theoremstyle{plain}
\newtheorem{thrm}{Theorem}[section]
\newtheorem{cor}[thrm]{Corollary}
\newtheorem{prop}[thrm]{Proposition}
\newtheorem{lem}[thrm]{Lemma}
\theoremstyle{definition}
\newtheorem{defn}[thrm]{Definition}
\newtheorem{rem}[thrm]{Remark}
\newtheorem{exm}[thrm]{Example}
\crefname{thrm}{Theorem}{Theorems}
\crefname{theorem}{Theorem}{Theorems}
\crefname{lem}{Lemma}{Lemmas}
\crefname{cor}{Corollary}{Corollaries}
\crefname{prop}{Proposition}{Propositions}
\crefname{defn}{Definition}{Definitions}
\crefname{exm}{Example}{Examples}
\crefname{rem}{Remark}{Remarks}
\crefname{conj}{Conjecture}{Conjectures}
\crefname{quest}{Question}{Questions}
\crefname{section}{Section}{Sections}
\crefname{equation}{\unskip}{\unskip}
\crefname{enumi}{\unskip}{\unskip}
\crefname{subsection}{Subsection}{Subsections}
\newcommand{\af}{\alpha}
\newcommand{\bt}{\beta}
\newcommand{\lb}{\lambda}
\newcommand{\vf}{\varphi}
\newcommand{\dl}{\delta}
\newcommand{\kp}{\kappa}
\newcommand{\CC}{\mathbb{C}}
\newcommand{\Z}{\mathbb{Z}}
\newcommand{\NN}{\mathbb{N}}
\newcommand{\cP}{\mathcal{P}}
\newcommand{\m}{{}^{-1}}
\newcommand{\sst}{\subseteq}
\newcommand{\impl}{\Rightarrow}
\newcommand{\tr}{\triangle}
\newcommand{\ch}{\mathrm{char}}
\renewcommand{\iff}{\Leftrightarrow}
\begin{document}
	\title[Invertibility preservers of finitary incidence algebras]{Invertibility preservers\\ of finitary incidence algebras}	
	
	\author{Jorge J. Garc{\' e}s}
	\address{Departamento de Matem{\' a}tica Aplicada a la Ingenier{\' i}a Industrial, ETSIDI, Universidad Polit{\' e}cnica de Madrid, Madrid, Spain}
	\email{j.garces@upm.es}
	
	\author{Mykola Khrypchenko}
	\address{Departamento de Matem\'atica, Universidade Federal de Santa Catarina,  Campus Reitor Jo\~ao David Ferreira Lima, Florian\'opolis, SC, CEP: 88040--900, Brazil \and CMUP, Departamento de Matemática, Faculdade de Ciências, Universidade do Porto, Rua do Campo Alegre s/n, 4169--007 Porto, Portugal}
	\email[Corresponding author]{nskhripchenko@gmail.com}

	\subjclass[2020]{Primary: 15A86, 16S50, 06E20; secondary: 16W10}
	\keywords{Invertibility preserver; finitary incidence algebra; power set algebra; Boolean algebra; Jordan homomorphism}
	
	\begin{abstract}
		Let $X$ be an arbitrary poset and $K$ an arbitrary field. We describe linear unital invertibility preservers of the finitary incidence algebra $FI(X,K)$ in terms of certain maps of the power set algebra $\cP(X)$ and linear maps $FI(X,K)\to J(FI(X,K))$.
	\end{abstract}
	
	\maketitle
	
	\tableofcontents
	
	\section*{Introduction}
	
	Given a unital associative algebra $A$, denote by $U(A)$ its group of units. A map $\vf:A\to B$ between unital associative algebras is called an \textit{invertibility preserver} if 
	\begin{align}\label{vf(U(A))-sst-U(B)}
		\vf(U(A))\sst U(B).    
	\end{align}
	An invertibility preserver $\vf$ is \textit{strong} if $\vf(U(A))=U(B)$. If $\vf(1_A)=1_B$, then $\vf$ is said to be \textit{unital}. In the case $A=B$ we say that $\vf$ satisfying \cref{vf(U(A))-sst-U(B)} is an \textit{invertibility preserver of $A$}.
	
	It follows from the classical result by Dieudonn\'e~\cite[Theorem 3]{Dieudonne49} that bijective strong linear invertibility preservers $\vf:M_n(K)\to M_n(K)$, where $M_n(K)$ is the full $n\times n$ matrix algebra over $K$, are of the form $\vf(X)=PXQ$ or $\vf(X)=PX^tQ$, where $P$ and $Q$ are fixed invertible matrices. In particular, $\vf$ is unital if and only if it is either an automorphism of $M_n(K)$ or an anti-automorphism of $M_n(K)$ (so that $\vf$ is a Jordan automorphism of $M_n(K)$). Marcus and Purves proved in~\cite[Theorem~2.1]{Marcus-Purves59} that the Dieudonn\'e's result holds over $K=\CC$ if one drops the bijectivity and strongness assumptions on $\vf$. Bre\v{s}ar and \v{S}emrl gave in~\cite{BresarSemrl99} an alternative proof of this fact by showing that unital linear invertibility preservers of $M_n(\CC)$ preserve idempotents. However, if $K$ is not algebraically closed, one cannot drop the strongness as observed in~\cite{Botta78}. The complete characterization of linear invertibility preservers of $M_n(K)$ over an arbitrary field $K$ has been accomplished relatively recently in~\cite{SeguinsPazzis10}. Soon after, \v{S}emrl~\cite{Semrl14} solved the problem for matrix rings over division rings. A non-linear generalization of~\cite[Theorem~3]{Dieudonne49} (in characteristic zero) has been obtained in~\cite{FosnerSemrl05}.
	
	On the other hand, the invertibility preservers of the algebra $T_n(K)$ of upper triangular $n\times n$ matrices over a field $K$ are in general far from being multiples of Jordan homomorphisms even in the bijective and strong case. More precisely, Chooi and Lim proved in~\cite[Theorem 3.3]{ChooiLim98} that $\vf:T_n(K)\to T_n(K)$ (with $|K|>2$) is a strong linear invertibility preserver if and only if, up to the multiplication by a fixed invertible matrix, $\vf$ permutes the diagonal entries of $X\in T_n(K)$. This characterization generalizes to the algebra $T_\infty(K)$ of infinite upper triangular matrices over $K$ whose entries are indexed by pairs of natural numbers, as claimed in~\cite[Theorem~1.1]{Slowik15} (see also~\cite[Theorem~3.3]{Slowik15} for the non-strong case and~\cite[Theorem~1.2]{Slowik15} that describes those invertibility preservers which preserve the inverses). However, the statements of~\cite[Theorems~1.1 and~3.3]{Slowik15} have some problems whenever $|K|<\infty$ (see \cref{connection-with-Slowik} below). 
	
	There are also numerous generalizations of the Dieudonn\'e's result to the operator algebras (see~\cite{JafarianSourour86,AupetitMouton94,BresarSemrl98,Sourour96,Aupetit2000,BresarFosnerSemrl03}).
	
	In this paper we describe linear unital invertibility preservers of the finitary incidence algebra $FI(X,K)$ of a poset $X$ over a field $K$. We mostly deal with the case $|K|>2$, but the case $|K|=2$ will also be treated at the end of the work.
	
	In \cref{sec-prelim} we recall the definition of $FI(X,K)$ and some facts on its structure that will be used throughout the text. As it will be seen below, the invertibility preservers of $FI(X,K)$ are closely related to certain maps on the power set algebra $\cP(X)$, so we also give some basic definitions from the theory of Boolean algebras and prove an auxiliary lemma. 
	
	\cref{sec-inv-pres-general} is the main part of the paper. In \cref{sec-inv-pres-|K|>2} we characterize in \cref{inv-pres-for-|K|>2} linear unital invertibility preservers of $FI(X,K)$ with $|K|>2$ in terms of $|K|$-complete endomorphisms of $\cP(X)$ and linear maps $FI(X,K)\to J(FI(X,K))$. Observe that in \cref{inv-pres-for-|K|>2} we do not assume bijectivity or strongness of the preservers --- these properties are then discussed in \cref{sec-bij-and-str} (see \cref{vf-strong<=>lb(A)-nonempty,bij-strong-|K|>2,surj-strong-|K|>2,bijective-is-strong}). We show in \cref{sec-connection} how \cref{inv-pres-for-|K|>2} applies to $T_n(K)$ and $T_\infty(K)$ and recover in \cref{inv-pres-|X|<infty,connection-with-Slowik} some results from \cite{ChooiLim98,Slowik15}. The invertibility preservers of $FI(X,K)$ that preserve the inverses are studied in \cref{sec-inverse-pres}. It turns out by \cref{vf-pres-inverses=>vf(1)vf-pres-idemp} that they are related to idempotent preservers, so that, whenever $|X|<\infty$ and $\ch(K)\ne 2$, the unital ones are exactly the unital Jordan endomorphisms of $FI(X,K)$ (see \cref{vf-pres-inverses=>vf-Jordan-homo,vf-pres-inverses=>vf-pm-auto-or-anti-auto}). We finish the paper providing in \cref{sec-inv-pres-over-Z_2} a characterization of linear unital invertibility preservers of $FI(X,\Z_2)$, which is similar to that of \cref{inv-pres-for-|K|>2} but involves a wider class of maps on $\cP(X)$ (see \cref{inv-pres-over-Z_2,vf-strong<=>lb-injective,bij-strong-over-Z_2}).
	
	\section{Preliminaries}\label{sec-prelim}
	
	\subsection{Invertibility preservers}
	
	If $\vf:A\to B$ is a linear invertibility preserver, then $\psi=\vf(1_A)\m \vf$ is a \textit{unital} linear invertibility preserver. Moreover, $\vf$ is strong (or bijective) if and only if $\psi$ is. Thus, the description of all/strong/bijective linear invertibility preservers $\vf:A\to B$ can be obtained from the description of the unital ones by multiplying by a fixed element of $U(B)$.
	
	\subsection{Finitary incidence algebras}
	Let $(X,\le)$ be a poset and $K$ a field.
	The \textit{finitary incidence algebra}~\cite{Khripchenko-Novikov09} of $X$ over $K$ is the $K$-space $FI(X,K)$ of formal sums of the form
	\begin{align}\label{af=formal-sum}
		\af = \sum_{x\leq y} \af_{xy}e_{xy},
	\end{align}
	where $x,y\in X$, $\af_{xy}\in K$ and $e_{xy}$ is a symbol, such that for all $x<y$ the set of $x \le u<v \le y$ with $\af_{uv}\ne 0$ is finite. The product in
	$FI(X,K)$ is given by the convolution
	\begin{align}\label{conv-product}
		\alpha\beta = \sum_{x\le y}\left(\sum_{x\le z \le y}\af_{xz}\bt_{zy} \right)e_{xy},
	\end{align}
	so that $FI(X,K)$ is associative and $\dl:=\sum_{x\in X}e_{xx}$ is its identity element. If $X$ is locally finite, $FI(X,K)$ coincides with the classical incidence algebra $I(X,K)$. In particular, for $X=\{1,\dots,n\}$ with the usual order, $FI(X,K)\cong T_n(K)$, the algebra of upper triangular $n\times n$ matrices over $K$, and for $X=\NN$ with the usual order, $FI(X,K)\cong T_\infty(K)$, the algebra of infinite upper triangular matrices over $K$, studied in~\cite{Slowik15}.
	
	We generalize the notation~\cref{af=formal-sum} as follows. Given $\af\in FI(X,K)$ and $\{\af_s\}_{s\in S}\sst FI(X,K)$, we write $\af=\sum_{s\in S}\af_s$ whenever for any $x\le y$ the set $S(x,y)=\{s\in S\mid (\af_s)_{xy}\ne 0\}$ is finite\footnote{In the most cases that we are working with $|S(x,y)|\le 1$.} and $\af_{xy}=\sum_{s\in S(x,y)}(\af_s)_{xy}$. As usually, we denote $e_x:=e_{xx}$ and $e_A:=\sum_{x\in A}e_x$ for all $A\sst X$. Observe that $e_X=\dl$.
	
	Given $\af\in FI(X,K)$, denote
	\begin{align*}
		\af_D=\sum_{x\in X}\af_{xx}e_{xx}\text{ and }\af_J=\sum_{x<y}\af_{xy}e_{xy},
	\end{align*}
	so that
	\begin{align}\label{af=af_D+af_J}
		\af=\af_D+\af_J.
	\end{align}
	Let 
	\begin{align*}
		D(X,K)&=\{\af\in FI(X,K)\mid \af_J=0\}\\
		J(FI(X,K))&=\{\af\in FI(X,K)\mid \af_D=0\}.
	\end{align*}
	Then $D(X,K)$ is a commutative subalgebra of $FI(X,K)$ and $J(FI(X,K))$ is an ideal of $FI(X,K)$ which coincides with the Jacobson radical of $FI(X,K)$ (see~\cite[Corollary 2]{Khripchenko-Novikov09}). There is an obvious isomorphism of $K$-spaces
	\begin{align*}
		FI(X,K)\cong D(X,K)\oplus J(FI(X,K))    
	\end{align*}
	induced by \cref{af=af_D+af_J}.
	
	The invertible elements of $FI(X,K)$ have been described in~\cite[Theorem 2]{Khripchenko-Novikov09}. Recall that 
	\begin{align*}
		\af\in U(FI(X,K))\iff \forall x\in X:\ \af_{xx}\ne 0.
	\end{align*} 
	In particular, $\af\in U(FI(X,K))\iff \af_D\in U(FI(X,K))$.
	
	\subsection{Boolean algebras}
	We follow here the definitions and terminology of~\cite{Monk}. A \textit{Boolean algebra} is a sextuple $(A,+,\cdot,-,0,1)$, where $A$ is a set, $+$ and $\cdot$ are binary operations on $A$, $-$ is a unary operation on $A$ and $0,1$ are distinguished elements of $A$, such that
	\begin{enumerate}
		\item $+$ and $\cdot$ are commutative and associative;
		\item $+$ and $\cdot$ are distributive with respect to each other;
		\item the absorption law holds: $x+(x\cdot y)=x$ and $x\cdot(x+y)=x$;
		\item the complementation law holds: $x+(-x)=1$ and $x\cdot(-x)=0$.
	\end{enumerate}
	A classical example is \textit{the power set algebra} $\cP(X)$, i.e. the Boolean algebra of all the subsets of a set $X$ under the usual set-theoretic operations of union, intersection and complement.
	
	For any Boolean algebra $A$ the relation 
	\begin{align*}
		x\le y\iff x+y=y\iff x\cdot y=x
	\end{align*}
	is a partial order on $A$. Moreover, $(A,\le)$ is a distributive complemented lattice with $0$ and $1$, where $x\vee y=x+y$ and $x\wedge y=x\cdot y$. A minimal (under $\le$) element of $A\setminus\{0\}$ (if it exists) is called an \textit{atom} of $A$. In $\cP(X)$ we have $U\le V\iff U\sst V$, so the atoms of $\cP(X)$ are exactly $\{x\}$, where $x\in X$.
	
	For any $\{a_i\}_{i\in I}\sst A$ denote 
	\begin{align*}
		\sum_{i\in I} a_i:=\sup_{i\in I} a_i\text{ and }\prod_{i\in I} a_i:=\inf_{i\in I} a_i\ (\text{under } \le),  
	\end{align*}
	if they exist. Given a cardinal $\kp$, we say that $A$ is \textit{$\kp$-complete} if $\sum_{i\in I} a_i$ and $\prod_{i\in I} a_i$ exist for all $\{a_i\}_{i\in I}\sst A$ with $|I|\le \kp$. Moreover, $A$ is \textit{complete} if it is $\kp$-complete for all $\kp$. Clearly, the power set algebra $\cP(X)$ is complete.
	
	Homomorphisms, endomorphisms, isomorphisms and automorphisms of Boolean algebras are defined in a natural way (observe that all of them are required to preserve $0$ and $1$). A homomorphism $f:A\to B$ of Boolean algebras is \textit{$\kp$-complete} ($\kp$ is a cardinal) if for all $\{a_i\}_{i\in I}\sst A$ with $|I|\le \kp$:
	\begin{align*}
		\exists\sum_{i\in I} a_i\ \impl\ \exists\sum_{i\in I} f(a_i)=f\left(\sum_{i\in I} a_i\right).    
	\end{align*}
	
	\begin{exm}\label{non-complete-endo-P(X)}
		Let $|X|=\kp$, an infinite cardinal. Fix $Y\subsetneq X$ with $|Y|=\kp$ and a bijection $\mu:X\to Y$. For any finite $A\sst X$ define $\lb(A)=\{\mu(a)\mid a\in A\}$ and $\lb(X\setminus A)=X\setminus\lb(A)$. Then $\lb:\mathcal{FC}(X)\to\cP(X)$ is a homomorphism, where $\mathcal{FC}(X)$ is the Boolean subalgebra of $\cP(X)$ consisting of finite or cofinite $A\in\cP(X)$. Since $\cP(X)$ is complete, by Sikorski's extension theorem (\cite[Theorem 5.9]{Monk}), $\lb$ extends to an endomorphism of $\cP(X)$. But $\bigcup_{x\in X}\lb(\{x\})=\bigcup_{x\in X}\{\mu(x)\}=Y\ne X=\lb(X)$, so the extended $\lb$ is not $\kp$-complete.
	\end{exm}
	
	It turns out that in some cases for $\kp$-completeness of $f:A\to B$ it suffices to deal with more specific families of elements of $A$. We say that $a,b\in A$ are \textit{disjoint} if $a\cdot b=0$. A family $\{a_i\}_{i\in I}\sst A$ of pairwise disjoint elements of $A$ is a \textit{partition of $1$} if $\sum_{i\in I}a_i=1$ (observe that we do not require that $a_i>0$ as in \cite[Definition~3.3]{Monk}).

	\begin{lem}\label{f-complete-iff-preserves-partitions}
		Let $A$ and $B$ be $\kp$-complete Boolean algebras. Then a homomorphism $f:A\to B$ is $\kp$-complete if and only if $\sum_{i\in I}f(a_i)=1$ whenever $\{a_i\}_{i\in I}\sst A$ is a partition of $1$ and $|I|\le\kp$. 
	\end{lem}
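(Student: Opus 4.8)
The forward implication is immediate: if $f$ is $\kp$-complete and $\{a_i\}_{i\in I}$ is a partition of $1$ with $|I|\le\kp$, then $\sum_{i\in I}a_i=1$ exists, so $\sum_{i\in I}f(a_i)=f(1)=1$. For the converse I would first record two facts valid for every Boolean homomorphism and used throughout. First, $f$ is order-preserving and satisfies $f(-a)=-f(a)$. Second, for any $\{a_i\}_{i\in I}\sst A$ with $|I|\le\kp$ such that $a:=\sum_{i\in I}a_i$ exists, the join $\sum_{i\in I}f(a_i)$ exists in $B$ (as $B$ is $\kp$-complete) and satisfies $\sum_{i\in I}f(a_i)\le f(a)$, since each $f(a_i)\le f(a)$ makes $f(a)$ an upper bound of $\{f(a_i)\}$. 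Hence proving $\kp$-completeness of $f$ reduces to the reverse inequality $f(a)\le\sum_{i\in I}f(a_i)$.

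To obtain this I would reduce an arbitrary family to a partition of $1$. The case of finite $\kp$ is trivial, since any homomorphism preserves finite joins; so assume $\kp$ is infinite. Fixing a well-order on $I$, set
\[
b_i=a_i\wedge\Bigl(-\sum_{j<i}a_j\Bigr)\qquad(i\in I),
\]
which is well defined because $|\{j<i\}|\le\kp$ and $A$ is $\kp$-complete. The elements $b_i$ are pairwise disjoint with $b_i\le a_i$, and (the key point, discussed below) $\sum_{i\in I}b_i=a$. Adjoining $-a$, the family $\{b_i\}_{i\in I}\cup\{-a\}$ is then a partition of $1$ of cardinality $\le\kp$. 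Applying the hypothesis together with $f(-a)=-f(a)$ gives $\sum_{i\in I}f(b_i)\vee(-f(a))=1$, which by the Boolean identity $x\vee(-y)=1\iff y\le x$ means $f(a)\le\sum_{i\in I}f(b_i)$. Since $f(b_i)\le f(a_i)$, this yields $f(a)\le\sum_{i\in I}f(a_i)$, as required.

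The main obstacle is the identity $\sum_{i\in I}b_i=a$ for the disjointified family: in an arbitrary (possibly atomless) $\kp$-complete Boolean algebra this cannot be read off elementwise as it could in $\cP(X)$. I would establish it by transfinite induction along the chosen well-order, proving $a_i\le\sum_{j\le i}b_j$ for every $i$. The inductive step rests on the decomposition $a_i\le b_i\vee\sum_{j<i}a_j$, which holds because $-\sum_{j<i}a_j$ and $\sum_{j<i}a_j$ are complementary; combining it with the induction hypothesis, which gives $\sum_{j<i}a_j\le\sum_{j<i}b_j$, forces $a_i\le\sum_{j\le i}b_j$. Taking the join over all $i$ then gives $a=\sum_{i}a_i\le\sum_i b_i\le a$. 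All the partial joins and meets involved exist since every index set has cardinality $\le\kp$ and both algebras are $\kp$-complete. Once this identity is in place, the remainder is the routine Boolean manipulation sketched above.
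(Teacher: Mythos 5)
Your proof is correct and follows essentially the same route as the paper's: disjointify the family to pairwise disjoint $b_i\le a_i$ with $\sum_{i\in I}b_i=a$, adjoin $-a$ to form a partition of $1$, apply the hypothesis, and recover $f(a)\le\sum_{i\in I}f(b_i)$ by meeting with $f(a)$. The only difference is that the paper cites Monk's Lemma 3.12 for the disjointification step, whereas you prove it from scratch by a well-order and transfinite induction (and your uniform adjunction of $-a$, even when $a=1$, is harmless since the paper's notion of partition allows zero terms).
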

	\begin{proof}
		
		
		We only need to prove the ``if'' part. The statement is trivial for any finite $I$, so take an arbitrary infinite $\{a_i\}_{i\in I}\sst A$ with $|I|\le\kp$ and denote $a=\sum_{i\in I}a_i$. By \cite[Lemma 3.12]{Monk} there is a family $\{b_i\}_{i\in I}\sst A$ of pairwise disjoint elements of $A$ such that $b_i\le a_i$ for all $i\in I$ and $\sum_{i\in I}b_i=a$. Then $f(b_i)\le f(a_i)\le \sum_{i\in I}f(a_i)$ implies $\sum_{i\in I}f(b_i)\le \sum_{i\in I}f(a_i)$. Furthermore, $a_i\le a$ yields $f(a_i)\le f(a)$, whence $\sum_{i\in I}f(a_i)\le f(a)$. Thus, we have
		\begin{align*}
			\sum_{i\in I}f(b_i)\le \sum_{i\in I}f(a_i)\le f(a).
		\end{align*}
		To complete the proof, it suffices to show that $f(a)\le\sum_{i\in I}f(b_i)$. If $a=1$, then $\{b_i\}_{i\in I}$ is a partition of $1$, and the claim holds by the assumption on $f$. Otherwise, $\{b_i\}_{i\in I}\cup\{-a\}$ is a partition of $1$ (observe that $b_i\cdot (-a)\le a\cdot (-a)=0$) of the same cardinality (recall that $I$ is infinite). Then $\sum_{i\in I}f(b_i)+(-f(a))=1$. Multiplying both sides by $f(a)$, we have $\left(\sum_{i\in I}f(b_i)\right)\cdot f(a)=f(a)$, i.e. $f(a)\le \sum_{i\in I}f(b_i)$, as needed. 
	\end{proof}
	
	\section{Invertibility preservers of finitary incidence algebras}\label{sec-inv-pres-general}
	
	We begin with some lemmas that hold in the general case.	
	
	\begin{lem}\label{vf-maps-J-to-J}
		Let $\vf:FI(X,K)\to FI(X,K)$ be a unital linear invertibility preserver. Then $\vf(J(FI(X,K)))\sst J(FI(X,K))$.
	\end{lem}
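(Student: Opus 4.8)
The plan is to exploit the invertibility criterion for $FI(X,K)$, namely that $\af$ is invertible if and only if all its diagonal entries $\af_{xx}$ are nonzero (equivalently, $\af_D$ is invertible). The crucial point is that elements of $J(FI(X,K))$ have vanishing diagonal part, so they can be added to the identity without affecting invertibility. Concretely, for any $n\in J(FI(X,K))$ and any scalar $\lb\in K$ the element $\dl+\lb n$ satisfies $(\dl+\lb n)_D=\dl$ and is therefore invertible. Applying $\vf$ and using its linearity and unitality, $\vf(\dl+\lb n)=\dl+\lb\vf(n)$ is invertible for every $\lb\in K$.

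First I would fix $n\in J(FI(X,K))$ and an arbitrary $x\in X$, and set $c=(\vf(n))_{xx}$. The invertibility of $\dl+\lb\vf(n)$ forces its $(x,x)$-entry, which equals $1+\lb c$, to be nonzero for all $\lb\in K$. If $c\ne0$, then taking $\lb=-c\m$ gives $1+\lb c=0$, contradicting invertibility; hence $c=0$. Since $x$ was arbitrary, $(\vf(n))_{xx}=0$ for every $x\in X$, that is $(\vf(n))_D=0$, which is precisely the assertion $\vf(n)\in J(FI(X,K))$.

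I do not expect a genuine obstacle in this argument: it is a direct combination of the invertibility criterion with the linearity and unitality of $\vf$, and it is completely insensitive to the cardinality of $K$ (when $K=\Z_2$ one simply uses $\lb=1$, since then $1+\lb c=1+c=0$ already forces $c=0$). The only point to keep in mind is the choice of the probing family $\{\dl+\lb n\}_{\lb\in K}$, which is arranged so that $\vf$ carries it to $\{\dl+\lb\vf(n)\}_{\lb\in K}$ and thereby constrains each diagonal entry of $\vf(n)$ separately.
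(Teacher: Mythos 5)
Your proof is correct and is essentially the paper's own argument: both probe $\vf$ with the invertible element $\dl+\lb\af$ (the paper's $k$ is exactly your $\lb=-c\m$) and derive a contradiction from the vanishing $(x,x)$-entry of $\vf(\dl+\lb\af)$. The paper phrases it as a contradiction for a single bad $x$ rather than quantifying over all $x$ first, but this is only a cosmetic difference.
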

	\begin{proof}
		Let $\af\in J(FI(X,K))$. Assume that $\vf(\af)\not\in J(FI(X,K))$, i.e. $\vf(\af)_{xx}\ne 0$ for some $x\in X$. Define 
		\begin{align*}
			k:=-(\vf(\af)_{xx})\m \text{ and }\bt:=\dl+k\af.
		\end{align*}
		Then $\bt\in U(FI(X,K))$, whence $\vf(\bt)=\dl+k\vf(\af)\in U(FI(X,K))$. But $\vf(\bt)_{xx}=1+k\vf(\af)_{xx}=0$, a contradiction.
	\end{proof}
	
	\begin{cor}\label{vf(f)_D-is-vf(f_D)_D}
		Let $\vf:FI(X,K)\to FI(X,K)$ be a unital linear invertibility preserver. Then for any $\af\in FI(X,K)$:
		\begin{align*}
			\vf(\af)_D=\vf(\af_D)_D.
		\end{align*}
	\end{cor}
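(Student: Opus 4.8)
The plan is to exploit the decomposition \cref{af=af_D+af_J} together with the linearity of $\vf$ and the projection $\af\mapsto\af_D$, reducing everything to \cref{vf-maps-J-to-J}. Since $\af_D\in D(X,K)$ and $\af_J\in J(FI(X,K))$, I would first write $\af=\af_D+\af_J$ and apply $\vf$ to obtain $\vf(\af)=\vf(\af_D)+\vf(\af_J)$.

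Next, I would observe that the map $\gm\mapsto\gm_D$ is $K$-linear (it simply extracts the diagonal coefficients $\gm_{xx}$), so taking the diagonal part of both sides gives
\begin{align*}
	\vf(\af)_D=\vf(\af_D)_D+\vf(\af_J)_D.
\end{align*}

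It then remains to kill the second summand. Here I would invoke \cref{vf-maps-J-to-J}: since $\af_J\in J(FI(X,K))$, we have $\vf(\af_J)\in J(FI(X,K))$, and by the definition of $J(FI(X,K))$ this means $\vf(\af_J)_D=0$. Substituting this into the displayed identity yields $\vf(\af)_D=\vf(\af_D)_D$, as claimed.

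There is no genuine obstacle in this argument: the corollary is an immediate formal consequence of \cref{vf-maps-J-to-J} combined with the linearity of $\vf$ and of the diagonal projection. The only point worth stating explicitly is that $\gm\mapsto\gm_D$ respects the $K$-space decomposition $FI(X,K)\cong D(X,K)\oplus J(FI(X,K))$ induced by \cref{af=af_D+af_J}, so that the diagonal part of a sum is the sum of the diagonal parts.
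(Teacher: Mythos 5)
Your proof is correct and is essentially identical to the paper's: the authors likewise write $\vf(\af)_D=\vf(\af_D+\af_J)_D=\vf(\af_D)_D+\vf(\af_J)_D$ and kill the last term via \cref{vf-maps-J-to-J}. Your only addition is to spell out the linearity of the diagonal projection $\gm\mapsto\gm_D$, which the paper uses implicitly.
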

	\begin{proof}
		We have $\vf(\af)_D=\vf(\af_D+\af_J)_D=\vf(\af_D)_D+\vf(\af_J)_D=\vf(\af_D)_D$, where the latter equality is due to \cref{vf-maps-J-to-J}.
	\end{proof}
	
	\begin{lem}\label{from-vf-to-lb}
		Let $\vf:FI(X,K)\to FI(X,K)$ be a unital linear invertibility preserver. Then there exists $\lb:\cP(X)\to \cP(X)$ such that for all $A\sst X$:
		\begin{align}\label{vf(e_A)_D=e_lb(A)}
			\vf(e_A)_D=e_{\lb(A)}.
		\end{align}
	\end{lem}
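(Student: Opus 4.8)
The plan is to read $\lb(A)$ off the diagonal of $\vf(e_A)$ directly: define $\lb(A):=\{x\in X\mid \vf(e_A)_{xx}=1\}$, so that \cref{vf(e_A)_D=e_lb(A)} is equivalent to the assertion that every diagonal entry of $\vf(e_A)$ lies in $\{0,1\}$. Thus the entire content of the lemma is to rule out diagonal entries of $\vf(e_A)$ that differ from $0$ and $1$; once this is done, $\vf(e_A)_D=\sum_{x}\vf(e_A)_{xx}e_x$ is automatically of the form $e_{\lb(A)}$.

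To carry this out I would exploit the scalar perturbations $\dl-t\,e_A$ with $t\in K$, together with the description of $U(FI(X,K))$ recalled above (an element is invertible iff all of its diagonal entries are nonzero). Since $e_A$ is the diagonal idempotent with $(e_A)_{yy}=1$ for $y\in A$ and $(e_A)_{yy}=0$ otherwise, the element $\dl-t\,e_A$ has diagonal entries $1-t$ at the $y\in A$ and $1$ at the $y\notin A$; hence, provided $A\ne\emptyset$, it is invertible precisely when $t\ne 1$. Applying $\vf$, which is linear and unital, yields $\vf(\dl-t\,e_A)=\dl-t\,\vf(e_A)$, whose $(x,x)$-entry equals $1-t\,\vf(e_A)_{xx}$.

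The key step is then a short contradiction. Fix $x\in X$, put $c:=\vf(e_A)_{xx}$, and suppose $c\notin\{0,1\}$. Because $c\ne 0$ the scalar $c\m$ exists, and because $c\ne 1$ we have $c\m\ne 1$; so $t:=c\m$ satisfies $t\ne 1$, whence $\dl-t\,e_A\in U(FI(X,K))$ and therefore $\dl-t\,\vf(e_A)=\vf(\dl-t\,e_A)\in U(FI(X,K))$. But its $(x,x)$-entry is $1-c\m c=0$, contradicting the invertibility criterion. Hence $c\in\{0,1\}$ for every $x$, which is exactly \cref{vf(e_A)_D=e_lb(A)}. The degenerate case $A=\emptyset$ is immediate, since $\vf(0)=0$ forces $\lb(\emptyset)=\emptyset$.

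I do not expect a serious obstacle; the only point requiring a little care is that the argument be uniform in the size of $K$. When $|K|>2$ the contradiction above does all the work, while when $|K|=2$ there is in fact nothing to prove, as every element of $K=\{0,1\}$ already lies in $\{0,1\}$, so the diagonal of $\vf(e_A)$ is automatically of the form $e_{\lb(A)}$.
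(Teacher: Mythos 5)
Your proposal is correct and takes essentially the same route as the paper: the paper's witness $\af:=\dl+(k-1)e_A$ with $k:=1-(\vf(e_A)_{xx})\m$ is exactly your perturbation $\dl-c\m e_A$ with $c=\vf(e_A)_{xx}$, and both arguments derive the contradiction from the diagonal invertibility criterion in $FI(X,K)$ after defining $\lb(A)=\{x\in X\mid \vf(e_A)_{xx}=1\}$. Your side remarks on $A=\emptyset$ and $|K|=2$ are harmless, since in those cases the hypothesis $\vf(e_A)_{xx}\notin\{0,1\}$ is vacuous and the paper's proof covers them implicitly.
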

	\begin{proof}
		Take an arbitrary $A\sst X$ and assume that $\vf(e_A)_{xx}\not\in\{0,1\}$ for some $x\in X$. Then define 
		\begin{align*}
			k:=1-(\vf(e_A)_{xx})\m\ne 0\text{ and }\af:=\dl+(k-1)e_A.
		\end{align*}
		Since 
		\begin{align*}
			\af_{yy}=
			\begin{cases}
				k, & y\in A,\\
				1, & y\not\in A,
			\end{cases}
		\end{align*}
		we have $\af\in U(FI(X,K))$. Hence, $\vf(\af)=\dl+(k-1)\vf(e_A)\in U(FI(X,K))$. However, $\vf(\af)_{xx}=1+(k-1)\vf(e_A)_{xx}=0$, a contradiction. Thus, 
		\begin{align*}
			\lb(A):=\{x\in X\mid \vf(e_A)_{xx}=1\}
		\end{align*}
		defines the desired map $\lb:\cP(X)\to \cP(X)$ satisfying \cref{vf(e_A)_D=e_lb(A)}.
	\end{proof}
	
	\subsection{The case $|K|>2$}\label{sec-inv-pres-|K|>2}
	In order to proceed, we have to impose certain restrictions on $K$. We first consider the case $|K|>2$. 
	\begin{lem}\label{lb-separating}
		Let $|K|>2$, $\vf:FI(X,K)\to FI(X,K)$ be a unital linear in\-ver\-ti\-bi\-li\-ty preserver and $\lb:\cP(X)\to \cP(X)$ the associated map as in \cref{from-vf-to-lb}. Then for all $A,B\in\cP(X)$:
		\begin{align}\label{A-cap-B=empty=>lb(A)-cap-lb(B)=empty}
			A\cap B=\emptyset\impl\lb(A)\cap\lb(B)=\emptyset.
		\end{align}
	\end{lem}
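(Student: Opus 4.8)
The plan is to argue by contradiction, exploiting the invertibility criterion recalled in \cref{sec-prelim} together with the fact, established inside the proof of \cref{from-vf-to-lb}, that every diagonal entry $\vf(e_A)_{xx}$ lies in $\{0,1\}$. In particular $x\in\lb(A)$ is equivalent to $\vf(e_A)_{xx}=1$. Assuming $A\cap B=\emptyset$ but that some $x$ belongs to $\lb(A)\cap\lb(B)$, we then have $\vf(e_A)_{xx}=\vf(e_B)_{xx}=1$, and the goal is to manufacture an invertible $\af$ assembled from $e_A$ and $e_B$ whose image under $\vf$ has a vanishing $(x,x)$-entry.

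The natural candidate is $\af=\dl+s\,e_A+t\,e_B$ with scalars $s,t\in K$. Since $A$ and $B$ are disjoint, the diagonal of $\af$ equals $1+s$ on $A$, equals $1+t$ on $B$, and equals $1$ off $A\cup B$; by the criterion $\af\in U(FI(X,K))\iff\forall y:\af_{yy}\neq 0$, this $\af$ is a unit precisely when $s\neq -1$ and $t\neq -1$. On the other hand, linearity and unitality of $\vf$ give $\vf(\af)=\dl+s\,\vf(e_A)+t\,\vf(e_B)$, whence $\vf(\af)_{xx}=1+s\cdot 1+t\cdot 1=1+s+t$. The contradiction will be reached as soon as $s,t$ are chosen simultaneously with $s\neq -1$, $t\neq -1$ and $s+t=-1$: the first two conditions keep $\af$ invertible, while the third forces $\vf(\af)_{xx}=0$ and hence $\vf(\af)\notin U(FI(X,K))$.

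The only genuine obstacle is the existence of such scalars, and this is exactly where the hypothesis $|K|>2$ enters. Putting $t=-1-s$ secures $s+t=-1$ automatically, and then $t\neq -1\iff s\neq 0$; so the two unit constraints $s\neq -1$ and $t\neq -1$ amount to the single requirement $s\in K\setminus\{0,-1\}$. Since $0\neq -1$ in any field, such an $s$ exists if and only if $K$ has at least three elements, i.e. $|K|>2$. With this choice the argument closes and yields $\lb(A)\cap\lb(B)=\emptyset$. I expect this scalar-counting step to be the crux: the case $|K|=2$ leaves no admissible $s$, which is precisely why $FI(X,\Z_2)$ must be handled separately later in the paper.
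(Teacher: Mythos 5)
Your proof is correct and is essentially the same as the paper's: under the substitution $s=k-1$, $t=-k$, your element $\dl+s\,e_A+t\,e_B$ with $s\in K\setminus\{0,-1\}$, $s+t=-1$ is exactly the paper's $\af=\dl+(k-1)e_A-ke_B$ with $k\in K\setminus\{0,1\}$, and both arguments derive the same contradiction $\vf(\af)_{xx}=0$ for an invertible $\af$. Your scalar-counting remark also matches the paper's use of the hypothesis $|K|>2$.
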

	\begin{proof}
		Assume that $A\cap B=\emptyset$, but $\lb(A)\cap\lb(B)\ne\emptyset$. Choose arbitrary $x\in \lb(A)\cap\lb(B)$ and $k\in K\setminus\{0,1\}$ and define
		\begin{align*}
			\af:=\dl+(k-1)e_A-ke_B.
		\end{align*}
		Then 
		\begin{align*}
			\af_{yy}=
			\begin{cases}
				k, & y\in A,\\
				1-k, & y\in B,\\
				1, & y\not\in A\sqcup B,
			\end{cases}
		\end{align*}
		so $\af\in U(FI(X,K))$. Therefore, $\vf(\af)=\dl+(k-1)e_{\lb(A)}-ke_{\lb(B)}\in U(FI(X,K))$. However, $\vf(\af)_{xx}=1+(k-1)-k=0$, a contradiction.
	\end{proof}
	
	\begin{defn}
		A map $\lb:\cP(X)\to \cP(X)$ satisfying \cref{A-cap-B=empty=>lb(A)-cap-lb(B)=empty} will be called \textit{separating}.
	\end{defn}
	
	
	\begin{exm}[Remark 3.2 from \cite{ChooiLim98}]
		Let $K=\Z_2$ and $|X|>2$. Fix distinct $x,y,z\in X$. Then $\vf(\af)=(\af_{xx}+\af_{yy}+\af_{zz})e_x+\sum_{w\ne x}\af_{ww}e_w$ is a unital linear strong invertibility preserver of $FI(X,K)$, whose $\lb:\cP(X)\to \cP(X)$ is not separating. 
		
		Indeed, the fact that $\vf$ is a unital strong invertibility preserver is due to
		\begin{align*}
			\af\in U(FI(X,\Z_2))\iff \af_D=\dl\iff \vf(\af)=\dl\iff \vf(\af)\in U(FI(X,\Z_2)).   
		\end{align*}
		Since $\vf(e_x)=e_x$ and $\vf(e_y)=e_x+e_y$, we have $\lb(\{x\})=\{x\}$ and $\lb(\{y\})=\{x,y\}$, so that $\lb$ is obviously not separating.
	\end{exm}

	\begin{lem}\label{lb-preserves-diff-and-cap}
		Let $|K|>2$, $\vf:FI(X,K)\to FI(X,K)$ be a unital linear invertibility preserver and $\lb:\cP(X)\to \cP(X)$ the associated map as in \cref{from-vf-to-lb}. Then $\lb$ is an endomorphism of the Boolean algebra $\cP(X)$.
	\end{lem}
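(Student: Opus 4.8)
The plan is to reduce everything to finite additivity of $\lb$ on disjoint unions together with the separating property already established in \cref{lb-separating}. First I would record the two boundary values. Since $e_\emptyset=0$ and $\vf$ is linear, $\vf(e_\emptyset)_D=0=e_{\lb(\emptyset)}$, so $\lb(\emptyset)=\emptyset$; and since $e_X=\dl$ and $\vf$ is unital, $\vf(e_X)_D=\dl=e_{\lb(X)}$, so $\lb(X)=X$.

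The key step is additivity on disjoint pairs. If $A\cap B=\emptyset$, then $e_{A\cup B}=e_A+e_B$, so linearity of $\vf$ together with the linearity of the diagonal-part map $\af\mapsto\af_D$ yields $e_{\lb(A\cup B)}=e_{\lb(A)}+e_{\lb(B)}$. Here \cref{lb-separating} is essential: it guarantees $\lb(A)\cap\lb(B)=\emptyset$, so that the diagonal coefficients on the right lie in $\{0,1\}$ and the equation forces $\lb(A\cup B)=\lb(A)\cup\lb(B)$. Without separateness the coefficient could equal $2$, which vanishes in characteristic $2$; this is exactly where the hypothesis $|K|>2$ enters, through \cref{lb-separating}. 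Iterating, $\lb$ is additive over any finite disjoint union.

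With additivity and separateness in hand, I would decompose a general pair $A,B$ into the pairwise disjoint pieces $P=A\sm B$, $Q=A\cap B$, $R=B\sm A$, so that $A=P\sqcup Q$, $B=Q\sqcup R$ and $A\cup B=P\sqcup Q\sqcup R$. Additivity gives $\lb(A)=\lb(P)\sqcup\lb(Q)$ and $\lb(B)=\lb(Q)\sqcup\lb(R)$, while \cref{lb-separating} makes $\lb(P),\lb(Q),\lb(R)$ pairwise disjoint. A direct set computation then yields $\lb(A)\cap\lb(B)=\lb(Q)=\lb(A\cap B)$ and $\lb(A)\sm\lb(B)=\lb(P)=\lb(A\sm B)$, i.e. $\lb$ preserves intersections and differences.

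Finally, to conclude that $\lb$ is a Boolean endomorphism it remains to handle complements and unions. Taking $A=X$ in the difference formula and using $\lb(X)=X$ gives $\lb(X\sm B)=X\sm\lb(B)$, so $\lb$ preserves complements; and $\lb(A\cup B)=\lb(P)\sqcup\lb(Q)\sqcup\lb(R)=\lb(A)\cup\lb(B)$ follows from additivity (alternatively from De Morgan once complements and intersections are known). Since $\lb$ sends $\emptyset\mapsto\emptyset$ and $X\mapsto X$ and preserves $\cap$ and complements, it is an endomorphism of $\cP(X)$. I expect the only genuinely delicate point to be the additivity step, precisely because it is the unique place where $|K|>2$ (via separateness) is used to rule out cancellation of diagonal idempotents; everything after that is routine set algebra.
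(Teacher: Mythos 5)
Your proof is correct, but it takes a genuinely different route from the paper's. The paper first establishes $\lb(X\sm A)=X\sm\lb(A)$ directly from $\vf(\dl-e_A)$, and then proves $\lb(A\cap B)=\lb(A)\cap\lb(B)$ by a case split on the characteristic: when $\ch(K)\ne 2$ it identifies both $\lb(A\cap B)$ and $\lb(A)\cap\lb(B)$ as the level set $\{x\in X\mid \vf(e_A+e_B)_{xx}=2\}$, using $e_A+e_B=e_{A\tr B}+2e_{A\cap B}$ together with \cref{lb-separating}; when $\ch(K)=2$ it picks $k\in K\sm\{0,1\}$ and reads off both sets as the level set at $1$ of $\vf(ke_A+(k+1)e_B)$, the four diagonal values $0,1,k,k+1$ being pairwise distinct. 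You replace both cases by a single uniform step --- finite additivity of $\lb$ on disjoint unions --- obtained from linearity of $\vf$ and of $\af\mapsto\af_D$ plus \cref{lb-separating} (disjointness of $\lb(A)$ and $\lb(B)$ rules out cancellation in $e_{\lb(A)}+e_{\lb(B)}$, which is precisely what fails over $\Z_2$), after which intersections, differences, complements and unions all follow from the decomposition $A=(A\sm B)\sqcup(A\cap B)$, $B=(A\cap B)\sqcup(B\sm A)$ by pure set algebra. What your route buys: no characteristic case analysis, the hypothesis $|K|>2$ is localized entirely inside \cref{lb-separating}, and $\vf$ is only ever evaluated on sums of two disjoint idempotents rather than on the paper's tailor-made test elements. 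Two small points to make explicit: when you iterate additivity to three blocks you need $\lb(P\sqcup Q)\cap\lb(R)=\emptyset$, which is \cref{lb-separating} applied to the disjoint pair $P\sqcup Q$ and $R$; and, as you correctly observe, in characteristic $\ne 2$ your additivity step would even reprove separateness for the pair $A,B$ (a diagonal value $2\notin\{0,1\}$ cannot occur on the left-hand side), whereas in characteristic $2$ the prior appeal to \cref{lb-separating} is indispensable.
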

	\begin{proof}
		Since $\dl=e_X$, we have 
		\begin{align}\label{lb(X)-is-X}
			\lb(X)=X.
		\end{align} 
		Furthermore, it follows from $e_{\lb(X\setminus A)}=\vf(e_{X\setminus A})=\vf(\dl-e_A)=\dl-e_{\lb(A)}=e_{X\setminus\lb(A)}$ that
		\begin{align}\label{lb(X-min-A)=X-min-lb(A)}
			\lb(X\setminus A)=X\setminus\lb(A).
		\end{align}
		Let us prove 
		\begin{align}\label{lb(A-cap-B)=lb(A)-cap-lb(B)}
			\lb(A\cap B)=\lb(A)\cap\lb(B).
		\end{align}
		
		\textit{Case 1.} $\ch(K)\ne 2$. Observe that $e_A+e_B=e_{A\tr B}+2e_{A\cap B}$, so $\vf(e_A+e_B)=e_{\lb(A\tr B)}+2e_{\lb(A\cap B)}$. Then
		\begin{align}\label{lb(A-cap-B)=L_2(vf(e_A+e_B))}
			\lb(A\cap B)=\{x\in X\mid \vf(e_A+e_B)_{xx}=2\},
		\end{align}
		because $\lb$ is separating by \cref{lb-separating} and $0\ne 2$. On the other hand, $\vf(e_A+e_B)=\vf(e_A)+\vf(e_B)=e_{\lb(A)}+e_{\lb(B)}=e_{\lb(A)\tr \lb(B)}+2e_{\lb(A)\cap \lb(B)}$, whence
		\begin{align}\label{lb(A)-cap-lb(B)=L_2(vf(e_A+e_B))}
			\lb(A)\cap \lb(B)=\{x\in X\mid \vf(e_A+e_B)_{xx}=2\}.
		\end{align}
		Combining \cref{lb(A-cap-B)=L_2(vf(e_A+e_B)),lb(A)-cap-lb(B)=L_2(vf(e_A+e_B))}, we get \cref{lb(A-cap-B)=lb(A)-cap-lb(B)}.
		
		\textit{Case 2.} $\ch(K)=2$. Choose $k\in K\setminus\{0,1\}$ and define $\af=ke_A+(k+1)e_B$. Then $\af=ke_{A\setminus B}+(k+1)e_{B\setminus A}+e_{A\cap B}$. Applying $\vf$, we get $\vf(\af)=ke_{\lb(A\setminus B)}+(k+1)e_{\lb(B\setminus A)}+e_{\lb(A\cap B)}$. Since $\lb$ is separating by \cref{lb-separating} and $0$, $1$, $k$ and $k+1$ are all distinct, we have $\lb(A\cap B)=\{x\in X\mid \vf(\af)_{xx}=1\}$. On the other hand, $\vf(\af)=ke_{\lb(A)}+(k+1)e_{\lb(B)}=ke_{\lb(A)\setminus \lb(B)}+(k+1)e_{\lb(B)\setminus \lb(A)}+e_{\lb(A)\cap \lb(B)}$, whence $\lb(A)\cap \lb(B)=\{x\in X\mid \vf(\af)_{xx}=1\}$. Thus, \cref{lb(A-cap-B)=lb(A)-cap-lb(B)} holds.
		
		Since $\emptyset = X\setminus X$, we obtain $\lb(\emptyset)=\emptyset$ from \cref{lb(X)-is-X,lb(X-min-A)=X-min-lb(A)}. The De Morgan's laws together with \cref{lb(X-min-A)=X-min-lb(A),lb(A-cap-B)=lb(A)-cap-lb(B)} imply that $\lb(A\cup B)=\lb(A)\cup\lb(B)$.
	\end{proof}
	
	
	
	

	\begin{lem}\label{union-lb(L_k(f))=X}
		Let $\vf:FI(X,K)\to FI(X,K)$ be a unital linear invertibility preserver and $\lb:\cP(X)\to \cP(X)$ the associated map as in \cref{from-vf-to-lb}. Then for any partition $\{X_i\}_{i\in I}$ of $X$ of cardinality at most $|K|$ we have
		\begin{align}\label{union-lb(X_k)=X}
			\bigcup_{i\in I}\lb(X_i)=X.
		\end{align}
	\end{lem}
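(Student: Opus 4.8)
The plan is to argue by contradiction. Suppose some $x\in X$ lies in $X\setminus\bigcup_{i\in I}\lb(X_i)$, and manufacture from the family $\{X_i\}_{i\in I}$ an invertible diagonal element whose image under $\vf$ has a vanishing $(x,x)$-entry, contradicting that $\vf$ preserves invertibility. First I would discard the empty parts: they contribute $\lb(\emptyset)=\emptyset$ (immediate from $\vf(e_\emptyset)=\vf(0)=0$), so without loss of generality every $X_i$ is nonempty, and then $\sum_{i\in I}c_ie_{X_i}\in U(FI(X,K))$ if and only if all $c_i\ne 0$.

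The key device is the functional $\Phi\colon K^I\to K$ defined by $\Phi\big((c_i)_{i\in I}\big)=\vf\big(\sum_{i\in I}c_ie_{X_i}\big)_{xx}$. Because $\{X_i\}_{i\in I}$ is a partition, the assignment $(c_i)\mapsto\sum_ic_ie_{X_i}$ is additive and homogeneous from $K^I$ to $FI(X,K)$ --- each diagonal coordinate receives a contribution from exactly one index, so there are no infinite-summation issues --- and composing with the linear map $\vf$ and the $(x,x)$-coordinate shows that $\Phi$ is $K$-linear. Two evaluations are immediate: writing $\mathbf 1$ for the all-ones family, $\sum_ie_{X_i}=e_X=\dl$ gives $\Phi(\mathbf 1)=\vf(\dl)_{xx}=1$ by unitality; and for the indicator family $\mathbf e_i$ of a single index, $\Phi(\mathbf e_i)=\vf(e_{X_i})_{xx}=0$, since \cref{from-vf-to-lb} gives $\vf(e_{X_i})_D=e_{\lb(X_i)}$ and $x\notin\lb(X_i)$. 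In particular $\Phi$ annihilates every finitely supported family.

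Now I would use the cardinality hypothesis $|I|\le|K|$ to fix an injection $i\mapsto w_i$ of $I$ into $K$, giving a family $w=(w_i)$ with pairwise distinct entries. Setting $t=\Phi(w)$ and $v=w-t\mathbf 1$, linearity together with $\Phi(\mathbf 1)=1$ yields $\Phi(v)=\Phi(w)-t=0$; and since the $w_i$ are distinct, at most one coordinate of $v$ equals $0$. Altering that single coordinate to a nonzero scalar changes $v$ by a finitely supported family and hence does not change the value of $\Phi$, so I obtain a family $\tilde v$ with all coordinates nonzero and $\Phi(\tilde v)=0$. Then $\af=\sum_i\tilde v_ie_{X_i}$ lies in $U(FI(X,K))$, whence $\vf(\af)\in U(FI(X,K))$ and $\vf(\af)_{xx}\ne 0$; but $\vf(\af)_{xx}=\Phi(\tilde v)=0$, a contradiction. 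Hence no such $x$ exists and $\bigcup_{i\in I}\lb(X_i)=X$.

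The only delicate point --- and the heart of the argument --- is the passage to infinite partitions. Finite linearity of $\vf$ does not by itself allow a termwise expansion of $\vf\big(\sum_ic_ie_{X_i}\big)$, so one cannot simply read off $\Phi(\mathbf 1)=\sum_i\Phi(\mathbf e_i)$ from the vanishing on single indices; the whole force of the proof is in extracting a contradiction from invertibility via the injective, hence ``generic,'' choice of the $w_i$. Distinctness of the entries is exactly what guarantees that subtracting the scalar $t$ kills at most one coordinate, leaving an invertible element on which $\Phi$ vanishes. This is precisely where the bound $|I|\le|K|$ enters, and I expect it to be the main obstacle to formalize cleanly.
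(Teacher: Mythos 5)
Your proof is correct and takes essentially the same route as the paper's: your injective family $w$, the shift $v=w-\Phi(w)\mathbf{1}$, and the patching of the at most one vanishing coordinate correspond exactly to the paper's choice of distinct scalars $\{k_i\}_{i\in I}$, its element $\bt=\af-k\dl$ (Case 2), and its correction $\bt=\af-k\dl+e_{X_i}$ when some $k_i=k$ (Case 1). The functional $\Phi$ merely repackages the coordinate computation $\vf(\bt)_{xx}=0$ so as to unify the paper's two cases into one.
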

	\begin{proof}
		Assume that there is $x\in X$ such that $x\not\in\lb(X_i)$ for all $i\in I$. Choose an arbitrary subset $\{k_i\}_{i\in I}\sst K$ of cardinality $|I|$ with $k_i\ne k_j$ ($i\ne j$) and define $\af=\sum_{i\in I}k_ie_{X_i}$. Denote $k:=\vf(\af)_{xx}$.
		
		\textit{Case 1.} There exists (a unique) $i\in I$ such that $k_i=k$. Then define $\bt:=\af-k\dl+e_{X_i}$.
		We have
		\begin{align*}
			\bt_{yy}=
			\begin{cases}
				1, & y\in X_i,\\
				\af_{yy}-k, & y\not\in X_i,
			\end{cases}
		\end{align*}
		where $\af_{yy}-k\ne 0$ for $y\not\in X_i$ because $k\ne k_j$ for $j\ne i$. Hence, $\bt\in U(FI(X,K))$, and consequently $\vf(\bt)\in U(FI(X,K))$. However, $\vf(\bt)_{xx}=\vf(\af)_{xx}-k+(e_{\lb(X_i)})_{xx}=k-k+0=0$, a contradiction.
		
		\textit{Case 2.} $k_i\ne k$ for all $i\in I$. Then define $\bt:=\af-k\dl$. Since $\bt_{yy}=\af_{yy}-k\ne 0$ for all $y\in X$, we have $\bt\in U(FI(X,K))$, and consequently $\vf(\bt)\in U(FI(X,K))$. However, $\vf(\bt)_{xx}=\vf(\af)_{xx}-k=k-k=0$, a contradiction.
	\end{proof}
	
	\begin{cor}
		Let $|K|>2$, $\vf:FI(X,K)\to FI(X,K)$ be a unital linear invertibility preserver and $\lb:\cP(X)\to \cP(X)$ the associated map as in \cref{from-vf-to-lb}. Then $\lb$ is a $|K|$-complete endomorphism of the Boolean algebra $\cP(X)$.
	\end{cor}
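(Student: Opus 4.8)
The plan is to combine the two facts already established about $\lb$ with the partition criterion for $\kp$-completeness. By \cref{lb-preserves-diff-and-cap}, $\lb$ is an endomorphism of the Boolean algebra $\cP(X)$, and $\cP(X)$ is complete, hence in particular $|K|$-complete. So the only thing left to verify is the $|K|$-completeness condition on $\lb$, and for this I would invoke \cref{f-complete-iff-preserves-partitions} with $\kp=|K|$: it reduces the task to checking that $\lb$ sends every partition of $1$ of cardinality at most $|K|$ to a family whose supremum equals $1$.

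Next I would translate this condition into set-theoretic language. In $\cP(X)$ the top element is $X$, the supremum is the union, and disjointness is ordinary disjointness, so a partition of $1$ of cardinality $\le|K|$ is precisely a family $\{X_i\}_{i\in I}$ of pairwise disjoint subsets of $X$ with $\bigcup_{i\in I}X_i=X$ and $|I|\le|K|$; the condition to be checked becomes $\bigcup_{i\in I}\lb(X_i)=X$. This is exactly the assertion of \cref{union-lb(L_k(f))=X}, so once the dictionary between the Boolean-algebra and set-theoretic formulations is in place, the verification is immediate.

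The only point requiring care is that the notion of ``partition of $1$'' in \cref{f-complete-iff-preserves-partitions} permits empty parts, whereas \cref{union-lb(L_k(f))=X} speaks of a genuine partition of $X$. I would reconcile this by discarding the indices $i$ with $X_i=\emptyset$: since $\lb(\emptyset)=\emptyset$ (established in the course of \cref{lb-preserves-diff-and-cap}), such terms contribute nothing to $\bigcup_{i\in I}\lb(X_i)$, and what remains is a partition of $X$ into nonempty blocks of cardinality at most $|K|$, to which \cref{union-lb(L_k(f))=X} applies directly. This matching of the two formulations is the main (and essentially the only) obstacle; the rest is a direct appeal to the cited lemmas, yielding that $\lb$ is a $|K|$-complete endomorphism of $\cP(X)$.
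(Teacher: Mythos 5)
Your proposal is correct and matches the paper's proof exactly: the paper derives the corollary as an immediate consequence of \cref{lb-preserves-diff-and-cap}, \cref{f-complete-iff-preserves-partitions} (with $\kp=|K|$) and \cref{union-lb(L_k(f))=X}, which are precisely the three results you combine. Your extra care about empty blocks is a harmless refinement — the paper's notion of partition of $1$ explicitly allows zero parts, so \cref{union-lb(L_k(f))=X} applies directly — but it does not change the argument.
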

	\begin{proof}
		This is a consequence of \cref{lb-preserves-diff-and-cap,f-complete-iff-preserves-partitions,union-lb(L_k(f))=X}.
	\end{proof}
	
	Given $\af\in FI(X,K)$ and $k\in K$, denote 
	\begin{align*}
		L_k(\af):=\{x\in X\mid \af_{xx}=k\}.
	\end{align*}
	Clearly, $\{L_k(\af)\}_{k\in K}$ is a partition of $X$. Observe that $\af_D=\sum_{k\in K}ke_{L_k(\af)}$ for any $\af\in FI(X,K)$.

	\begin{lem}\label{vf(f)_D=sum-k-e_lb(L_k)}
		Let $|K|>2$, $\vf:FI(X,K)\to FI(X,K)$ be a unital linear invertibility preserver and $\lb:\cP(X)\to \cP(X)$ the associated map as in \cref{from-vf-to-lb}. Then for all $\af\in FI(X,K)$:
		\begin{align}\label{vf(f)_D=sum-ke_lb(L_k(f))}
			\vf(\af)_D=\sum_{k\in K}ke_{\lb(L_k(\af))}.
		\end{align}
	\end{lem}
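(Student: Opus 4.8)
The plan is to reduce the claimed identity to a computation of the individual diagonal entries of $\vf(\af_D)$, and then to pin down each such entry by a one-point invertibility argument in the spirit of \cref{union-lb(L_k(f))=X}. First I would record that, since $\af_D=\sum_{k\in K}ke_{L_k(\af)}$ and $\vf(\af)_D=\vf(\af_D)_D$ by \cref{vf(f)_D-is-vf(f_D)_D}, it suffices to prove \cref{vf(f)_D=sum-ke_lb(L_k(f))} for the diagonal element $\af_D$, i.e.\ to identify $\vf(\af_D)_{xx}$ for every $x\in X$. Before doing so I would check that the right-hand side is a genuine element of $D(X,K)$: the family $\{L_k(\af)\}_{k\in K}$ is a partition of $X$ of cardinality at most $|K|$, so \cref{lb-separating} makes the sets $\{\lb(L_k(\af))\}_{k\in K}$ pairwise disjoint, while \cref{union-lb(L_k(f))=X} gives $\bigcup_{k\in K}\lb(L_k(\af))=X$. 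Hence every $x\in X$ lies in $\lb(L_{k_0}(\af))$ for a unique $k_0=k_0(x)\in K$, and the $(x,x)$-entry of $\sum_{k\in K}ke_{\lb(L_k(\af))}$ is exactly this $k_0$.

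It then remains to prove $\vf(\af_D)_{xx}=k_0$, and I would argue by contradiction. Set $m:=\vf(\af_D)_{xx}$ and suppose $m\ne k_0$. Put
\begin{align*}
    \bt:=\af_D-m\dl+e_{L_m(\af)}.
\end{align*}
Inspecting the diagonal shows $\bt_{yy}=1$ for $y\in L_m(\af)$ and $\bt_{yy}=\af_{yy}-m\ne 0$ for $y\notin L_m(\af)$ (this stays correct even when $L_m(\af)=\emptyset$), so $\bt\in U(FI(X,K))$ and therefore $\vf(\bt)\in U(FI(X,K))$. Since $\bt$ differs from $\af_D$ by the finite combination $-m\dl+e_{L_m(\af)}$, linearity and unitality of $\vf$ give $\vf(\bt)=\vf(\af_D)-m\dl+\vf(e_{L_m(\af)})$, whence, using \cref{from-vf-to-lb},
\begin{align*}
    \vf(\bt)_{xx}=\vf(\af_D)_{xx}-m+\left(e_{\lb(L_m(\af))}\right)_{xx}=m-m+\left(e_{\lb(L_m(\af))}\right)_{xx}.
\end{align*}
Because $m\ne k_0$ we have $L_m(\af)\cap L_{k_0}(\af)=\emptyset$, so \cref{lb-separating} yields $\lb(L_m(\af))\cap\lb(L_{k_0}(\af))=\emptyset$; as $x\in\lb(L_{k_0}(\af))$ this forces $x\notin\lb(L_m(\af))$ and hence $\vf(\bt)_{xx}=0$, contradicting $\vf(\bt)\in U(FI(X,K))$. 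Thus $\vf(\af_D)_{xx}=k_0$, which is precisely the $(x,x)$-entry of the right-hand side of \cref{vf(f)_D=sum-ke_lb(L_k(f))}.

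The only delicate point is that $\af_D$ is in general an infinite sum $\sum_{k\in K}ke_{L_k(\af)}$, so one cannot distribute $\vf$ over it directly; the argument sidesteps this by working entry-by-entry and applying linearity of $\vf$ only to the finite perturbation $-m\dl+e_{L_m(\af)}$. I expect this to be the main subtlety, together with the bookkeeping needed to guarantee that the target sum defines an element of $D(X,K)$, for which the separating property and \cref{union-lb(L_k(f))=X} are both essential.
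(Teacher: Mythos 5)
Your proof is correct and follows essentially the same route as the paper: fix $x$, obtain the unique $k_0$ with $x\in\lb(L_{k_0}(\af))$ from \cref{union-lb(L_k(f))=X,lb-separating}, and derive a contradiction by building a unit whose image under $\vf$ has vanishing $(x,x)$-entry. The only differences are cosmetic: your single perturbation $\bt=\af_D-m\dl+e_{L_m(\af)}$ with $m=\vf(\af_D)_{xx}$ merges the paper's two cases ($k\ne 0$ and $k=0$, which use $\bt=\af-le_{L_k(\af)}+e_{L_0(\af)}$ and $\bt=\af-le_{L_0(\af)}$ respectively) into one, and your preliminary reduction to $\af_D$ via \cref{vf(f)_D-is-vf(f_D)_D} is valid but unnecessary, since the same entrywise computation works for $\af$ itself.
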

	\begin{proof}
		Given $x\in X$, by \cref{union-lb(L_k(f))=X} there is $k\in K$ such that $x\in\lb(L_k(\af))$. Since $\lb$ is separating by \cref{lb-separating}, such an element $k$ is unique. In order to establish \cref{vf(f)_D=sum-ke_lb(L_k(f))}, we have to prove that $\vf(\af)_{xx}=k$. Assume that 
		\begin{align}\label{vf(f)(y_y)=l-for-y-in-lb(L_k(f))}
			\vf(\af)_{xx}=l\ne k. 
		\end{align}
		
		\textit{Case 1.} $k\ne 0$. Define $\bt:=\af-le_{L_k(\af)}+e_{L_0(\af)}$. Then
		\begin{align*}
			\bt_{yy}=
			\begin{cases}
				1, & y\in L_0(\af),\\
				k-l, & y\in L_k(\af),\\
				\af_{yy}, & y\not\in L_0(\af)\sqcup L_k(\af),
			\end{cases} 
		\end{align*} 
		so $\bt\in U(FI(X,K))$ and thus $\vf(\bt)\in U(FI(X,K))$. However $\vf(\bt)_{xx}=\vf(\af)_{xx}-l(e_{\lb(L_k(\af))})_{xx}+(e_{\lb(L_0(\af))})_{xx}=l-l=0$ by \cref{vf(f)(y_y)=l-for-y-in-lb(L_k(f))} and the fact that $\lb(L_k(\af))\cap\lb(L_0(\af))=\emptyset$.
		
		\textit{Case 2.} $k=0$, so that $l\ne 0$ and $x\in\lb(L_0(\af))$. Define $\bt:=\af-le_{L_0(\af)}$. Then
		\begin{align*}
			\bt_{yy}=
			\begin{cases}
				-l, & y\in L_0(\af),\\
				\af_{yy}, & y\not\in L_0(\af).
			\end{cases} 
		\end{align*} 
		Hence, $\bt\in U(FI(X,K))$, and consequently $\vf(\bt)\in U(FI(X,K))$. But $\vf(\bt)_{xx}=\vf(\af)_{xx}-l(e_{\lb(L_0(\af))})_{xx}=l-l=0$ by \cref{vf(f)(y_y)=l-for-y-in-lb(L_k(f))}.
		
		Thus, both cases lead to a contradiction.
	\end{proof}
	
	%
	%
	
	\begin{lem}\label{from-psi-and-lb-to-vf}
		Any $|K|$-complete endomorphism $\lb$ of the Boolean algebra $\cP(X)$ and a linear map $\psi:FI(X,K)\to J(FI(X,K))$ define a linear invertibility preserver $\vf:FI(X,K)\to FI(X,K)$ by means of
		\begin{align}\label{vf(f)=sum-f(x_x)e_lb(x)+psi(f_J)}
			\vf(\af)=\sum_{k\in K}ke_{\lb(L_k(\af))}+\psi(\af).
		\end{align}
		Moreover, $\vf$ is unital if and only if 
		\begin{align}\label{psi(dl)=0}
			\psi(\dl)=0.
		\end{align}
	\end{lem}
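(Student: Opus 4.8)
The plan is to check, in order, that the right-hand side of \cref{vf(f)=sum-f(x_x)e_lb(x)+psi(f_J)} is a well-defined element of $FI(X,K)$, that the resulting $\vf$ is linear, that it preserves invertibility, and finally that it is unital exactly when \cref{psi(dl)=0} holds. Throughout I would abbreviate $D(\af):=\sum_{k\in K}ke_{\lb(L_k(\af))}$, so that $\vf(\af)=D(\af)+\psi(\af)$. For well-definedness I would observe that $\{L_k(\af)\}_{k\in K}$ is a partition of $X$ of cardinality at most $|K|$; since $\lb$ is a Boolean endomorphism, the images $\lb(L_k(\af))$ are pairwise disjoint (as $\lb$ preserves $\cap$ and $\emptyset$), and since $\lb$ is $|K|$-complete, \cref{f-complete-iff-preserves-partitions} gives $\bigcup_{k\in K}\lb(L_k(\af))=\lb(X)=X$. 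Thus $\{\lb(L_k(\af))\}_{k\in K}$ is again a partition of $X$, each $x$ lies in a unique $\lb(L_k(\af))$, and $D(\af)$ is a well-defined diagonal element; consequently $\vf(\af)\in FI(X,K)$ with $\vf(\af)_D=D(\af)$ and $\vf(\af)_J=\psi(\af)$.

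For linearity, homogeneity is easy: for $c\ne0$ the identity $L_k(c\af)=L_{c\m k}(\af)$ and a reindexing give $D(c\af)=cD(\af)$, and the case $c=0$ is immediate, so $\vf(c\af)=c\vf(\af)$. The real content is additivity of $D$, and here my plan is to refine both diagonals at once: the doubly indexed family $\{L_k(\af)\cap L_l(\bt)\}_{(k,l)\in K\times K}$ is a partition of $X$ of cardinality at most $|K|$ (using $|K\times K|=|K|$ when $K$ is infinite, while for finite $K$ the partition is finite and hence automatically preserved by a homomorphism), so by \cref{f-complete-iff-preserves-partitions} its $\lb$-image is a partition of $X$ as well. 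Then every $x\in X$ sits in exactly one $\lb(L_k(\af)\cap L_l(\bt))$; since $L_k(\af)\cap L_l(\bt)$ is contained in each of $L_k(\af)$, $L_l(\bt)$ and $L_{k+l}(\af+\bt)$, monotonicity of $\lb$ would place $x$ in $\lb(L_k(\af))$, in $\lb(L_l(\bt))$ and in $\lb(L_{k+l}(\af+\bt))$ simultaneously. Reading off diagonal entries gives $D(\af)_{xx}=k$, $D(\bt)_{xx}=l$ and $D(\af+\bt)_{xx}=k+l$, hence $D(\af+\bt)=D(\af)+D(\bt)$; combined with the linearity of $\psi$ this yields linearity of $\vf$.

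To see that $\vf$ preserves invertibility, I would use the criterion that $\af\in U(FI(X,K))$ iff $\af_{xx}\ne0$ for all $x$, i.e. $L_0(\af)=\emptyset$. Then $\lb(L_0(\af))=\emptyset$, and since $\{\lb(L_k(\af))\}_{k\in K}$ partitions $X$, every $x$ lies in some $\lb(L_k(\af))$ with $k\ne0$, giving $\vf(\af)_{xx}=k\ne0$ for all $x$, so $\vf(\af)\in U(FI(X,K))$. Finally, for the unitality claim, $\dl=e_X$ forces $L_1(\dl)=X$ and $L_k(\dl)=\emptyset$ for $k\ne1$, so $D(\dl)=e_{\lb(X)}=e_X=\dl$ and $\vf(\dl)=\dl+\psi(\dl)$; since $\dl\in D(X,K)$ and $\psi(\dl)\in J(FI(X,K))$, the direct sum decomposition $FI(X,K)=D(X,K)\oplus J(FI(X,K))$ shows $\vf(\dl)=\dl$ precisely when $\psi(\dl)=0$.

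The step I expect to be the main obstacle is the additivity of $D$: it is the only place where the $|K|$-completeness of $\lb$ is genuinely needed, and it hinges on the slightly delicate point that the refining partition indexed by $K\times K$ still has cardinality at most $|K|$, so that \cref{f-complete-iff-preserves-partitions} applies and the unique pair $(k,l)$ attached to each $x$ behaves additively.
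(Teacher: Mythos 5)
Your proposal is correct and takes essentially the same approach as the paper's proof: disjointness of the sets $\lb(L_k(\af))$ for well-definedness, the containment $L_k(\af)\cap L_l(\bt)\sst L_{k+l}(\af+\bt)$ together with $|K|$-completeness (via \cref{f-complete-iff-preserves-partitions}) for additivity, the identity $L_l(k\af)=L_{k\m l}(\af)$ for homogeneity, and the same invertibility and unitality arguments. The only (cosmetic) difference is that you apply completeness to the single refined partition $\{L_k(\af)\cap L_l(\bt)\}_{(k,l)\in K\times K}$, which requires noting $|K\times K|=|K|$ for infinite $K$, whereas the paper applies it to the two level-set partitions separately and then intersects their images.
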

	\begin{proof}
		Since the sets $\{L_k(\af)\}_{k\in K}$ are pairwise disjoint and $\lb$ is an endomorphism of $\cP(X)$, then the sets $\{\lb(L_k(\af))\}_{k\in K}$ are also pairwise disjoint, so $\sum_{k\in K}ke_{\lb(L_k(\af))}$ makes sense and belongs to $D(X,K)$. It is hence clear that $\vf(\af)\in FI(X,K)$, because $\psi(\af)\in FI(X,K)$. Thus, $\vf$ is well-defined. 
		
		Let us prove that $\vf$ is linear. If $x<y$, then $\vf(\af)_{xy}=\psi(\af)_{xy}$, which is linear in $\af$. Now let $\af,\bt\in FI(X,K)$ and $x\in X$. Since $\{L_k(\af)\}_{k\in K}$ and $\{L_k(\bt)\}_{k\in K}$ are partitions of $X$ of cardinality $|K|$ and $\lb$ is $|K|$-complete, then $\{\lb(L_k(\af))\}_{k\in K}$ and $\{\lb(L_k(\bt))\}_{k\in K}$ are also partitions of $X$. Hence, there are unique $k,l\in K$ such that $x\in \lb(L_k(\af))\cap \lb(L_l(\bt))$. Then $\vf(\af)_{xx}=k$ and $\vf(\bt)_{xx}=l$ by \cref{vf(f)=sum-f(x_x)e_lb(x)+psi(f_J)}. Now, $L_k(\af)\cap L_l(\bt)\sst L_{k+l}(\af+\bt)$, whence 
		\begin{align*}
			\lb(L_k(\af))\cap \lb(L_l(\bt))=\lb(L_k(\af)\cap L_l(\bt))\sst\lb(L_{k+l}(\af+\bt)),
		\end{align*}
		because $\lb$ is an endomorphism of $\cP(X)$. It follows that $x\in \lb(L_{k+l}(\af+\bt))$, whence $\vf(\af+\bt)_{xx}=k+l=\vf(\af)_{xx}+\vf(\bt)_{xx}$. It remains to show that $\vf(k\af)_{xx}=k\vf(\af)_{xx}$ for all $k\in K$. This is obvious for $k=0$. And if $k\ne 0$, then $L_l(k\af)=L_{k\m l}(\af)$, so, whenever $x\in \lb(L_l(k\af))$, we have $\vf(k\af)_{xx}=l=k\cdot(k\m l)=k\vf(\af)_{xx}$. 
		
		Let $\af\in U(FI(X,K))$, so that $L_0(\af)=\emptyset$. Then $\lb(L_0(\af))=\emptyset$, whence $\bigsqcup_{k\ne 0}\lb(L_k(\af))=X$, showing that $\vf(\af)_{xx}\ne 0$ for all $x\in X$ by \cref{vf(f)=sum-f(x_x)e_lb(x)+psi(f_J)}. Thus, $\vf(\af)\in U(FI(X,K))$. 
		
		Clearly, $\vf(\dl)_D=\dl$ due to \cref{vf(f)=sum-f(x_x)e_lb(x)+psi(f_J)}, because $L_1(\dl)=X$ and $\lb(X)=X$. Thus, $\vf(\dl)=\dl$ is equivalent to \cref{psi(dl)=0}.
	\end{proof}
	
	\begin{thrm}\label{inv-pres-for-|K|>2}
		Let $|K|>2$. Then unital linear invertibility preservers of $FI(X,K)$ are exactly the maps of the form \cref{vf(f)=sum-f(x_x)e_lb(x)+psi(f_J)}, where $\lb$ is a $|K|$-complete endomorphism of the Boolean algebra $\cP(X)$ and $\psi:FI(X,K)\to J(FI(X,K))$ is a linear map annihilating $\dl$.
	\end{thrm}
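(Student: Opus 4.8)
The plan is to prove the two inclusions separately and to observe that essentially all of the work has already been done in the preceding lemmas, so that the theorem itself is mostly a matter of assembling them. For the sufficiency direction --- that every map of the form \cref{vf(f)=sum-f(x_x)e_lb(x)+psi(f_J)}, with $\lb$ a $|K|$-complete endomorphism of $\cP(X)$ and $\psi:FI(X,K)\to J(FI(X,K))$ linear, is a unital linear invertibility preserver whenever it annihilates $\dl$ --- I would simply invoke \cref{from-psi-and-lb-to-vf}. That lemma already establishes that such a map is a linear invertibility preserver and, through \cref{psi(dl)=0}, that it is unital precisely when $\psi(\dl)=0$. Nothing further is required here.

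For the necessity direction, let $\vf$ be a unital linear invertibility preserver of $FI(X,K)$. First I would produce the associated map $\lb:\cP(X)\to\cP(X)$ of \cref{from-vf-to-lb}, characterized by $\vf(e_A)_D=e_{\lb(A)}$. Using the hypothesis $|K|>2$, \cref{lb-preserves-diff-and-cap} shows that $\lb$ is an endomorphism of the Boolean algebra $\cP(X)$; combining this with \cref{union-lb(L_k(f))=X} and the partition criterion of \cref{f-complete-iff-preserves-partitions} then upgrades $\lb$ to a $|K|$-complete endomorphism. At this stage \cref{vf(f)_D=sum-k-e_lb(L_k)} pins down the diagonal of every image, namely $\vf(\af)_D=\sum_{k\in K}ke_{\lb(L_k(\af))}$, via \cref{vf(f)_D=sum-ke_lb(L_k(f))}.

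It then remains to define $\psi$ and verify its three required properties. I would set $\psi(\af):=\vf(\af)_J$. This is linear, being the composite of the linear map $\vf$ with the linear projection $\bt\mapsto\bt_J$ of $FI(X,K)$ onto $J(FI(X,K))$, and in particular it takes values in $J(FI(X,K))$. Since $\vf$ is unital we have $\vf(\dl)=\dl$, whence $\psi(\dl)=\dl_J=0$ (this also follows from \cref{vf-maps-J-to-J}). Finally, using the decomposition $\vf(\af)=\vf(\af)_D+\vf(\af)_J$ together with the diagonal formula above, I obtain
\[
\vf(\af)=\sum_{k\in K}ke_{\lb(L_k(\af))}+\psi(\af),
\]
which is exactly \cref{vf(f)=sum-f(x_x)e_lb(x)+psi(f_J)}, completing this direction.

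Because the substantive content --- constructing $\lb$, proving that it is Boolean and $|K|$-complete, and identifying the diagonal of $\vf(\af)$ --- is entirely absorbed into the cited lemmas, I do not expect a genuine obstacle in the theorem itself. The only points to watch are that the $D$/$J$ splitting is canonical, so that the representation is forced with $\lb(A)=L_1(\vf(e_A))$ and $\psi(\af)=\vf(\af)_J$ both determined by $\vf$, and that the hypothesis $|K|>2$ enters solely through \cref{lb-preserves-diff-and-cap} (ultimately through the separating property of \cref{lb-separating}); this is exactly the step that fails for $|K|=2$ and explains why that case must be treated separately.
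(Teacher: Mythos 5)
Your proposal is correct and follows essentially the same route as the paper's own proof: sufficiency via \cref{from-psi-and-lb-to-vf}, and necessity by splitting $\vf(\af)=\vf(\af)_D+\vf(\af)_J$, identifying the diagonal through \cref{vf(f)_D=sum-k-e_lb(L_k)} (with $\lb$ a $|K|$-complete endomorphism obtained from \cref{lb-preserves-diff-and-cap,union-lb(L_k(f))=X,f-complete-iff-preserves-partitions}, which the paper's terser proof leaves implicit) and setting $\psi(\af)=\vf(\af)_J$. The only slip is your parenthetical claim that $\psi(\dl)=0$ ``also follows from \cref{vf-maps-J-to-J}'' --- that lemma concerns images of elements of $J(FI(X,K))$ and does not apply to $\dl$ --- but your primary justification, $\vf(\dl)=\dl$ hence $\psi(\dl)=\dl_J=0$, is correct, so nothing is affected.
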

	\begin{proof}
		We know from \cref{from-psi-and-lb-to-vf} that any map of the form \cref{vf(f)=sum-f(x_x)e_lb(x)+psi(f_J)} is a unital linear invertibility preserver of $FI(X,K)$. Conversely, let $\vf$ be a unital linear invertibility preserver of $FI(X,K)$. Then $\vf(\af)=\vf(\af)_D+\vf(\af)_J$, where $\vf(\af)_D$ is given by \cref{vf(f)_D=sum-ke_lb(L_k(f))} in view of \cref{vf(f)_D=sum-k-e_lb(L_k)}. It remains to define $\psi(\af)=\vf(\af)_J$ to get the desired form \cref{vf(f)=sum-f(x_x)e_lb(x)+psi(f_J)}.
	\end{proof}
	
	\subsection{Strongness and bijectivity of an invertibility preserver}\label{sec-bij-and-str}
	
	We now specify the result of \cref{inv-pres-for-|K|>2} to the case where $\vf$ is strong and/or bijective.
	\begin{cor}\label{vf-strong<=>lb(A)-nonempty}
		Under the conditions of \cref{inv-pres-for-|K|>2}, the unital invertibility preserver $\vf$ is strong if and only if the endomorphism $\lb$ is injective. 
	\end{cor}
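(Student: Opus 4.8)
The plan is to reduce strongness of $\vf$ to a single condition on the zero-set $L_0(\af)$ of the diagonal, and then to recognize that condition as injectivity of $\lb$. The pivotal point is a pointwise description of $\vf(\af)_D$. Since $\lb$ is a $|K|$-complete endomorphism of $\cP(X)$ and $\{L_k(\af)\}_{k\in K}$ is a partition of $X$ of cardinality at most $|K|$, the family $\{\lb(L_k(\af))\}_{k\in K}$ is again a partition of $X$: its members are pairwise disjoint because $\lb$ is separating (\cref{lb-separating}), and their union is $X$ by \cref{union-lb(L_k(f))=X}. Thus for every $x\in X$ there is a unique $k\in K$ with $x\in\lb(L_k(\af))$, and \cref{vf(f)_D=sum-k-e_lb(L_k)} gives $\vf(\af)_{xx}=k$. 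In particular $\vf(\af)_{xx}=0$ exactly when $x\in\lb(L_0(\af))$, so
\[
\vf(\af)\in U(FI(X,K))\iff\lb(L_0(\af))=\emptyset .
\]
Recalling that $\af\in U(FI(X,K))\iff L_0(\af)=\emptyset$, this equivalence is what I would use to settle both directions.

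Assume first that $\lb$ is injective, and let $\af$ satisfy $\vf(\af)\in U(FI(X,K))$. The displayed equivalence gives $\lb(L_0(\af))=\emptyset=\lb(\emptyset)$, so injectivity forces $L_0(\af)=\emptyset$, i.e. $\af\in U(FI(X,K))$. Together with the fact that $\vf$ is an invertibility preserver, this yields $\vf(\af)\in U(FI(X,K))\iff\af\in U(FI(X,K))$, which is the strongness of $\vf$.

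For the converse I would argue by contraposition. If $\lb$ is not injective then, as $\lb$ is a Boolean endomorphism (\cref{lb-preserves-diff-and-cap}) and homomorphisms preserve symmetric differences, there are $A\ne B$ with $\lb(A)=\lb(B)$, so that $C:=A\tr B$ is nonempty and $\lb(C)=\lb(A)\tr\lb(B)=\emptyset$. Taking $\af:=e_{X\sm C}$ yields a non-unit (its diagonal vanishes on the nonempty set $C$) with $L_0(\af)=C$, whence $\lb(L_0(\af))=\emptyset$ and $\vf(\af)\in U(FI(X,K))$ by the displayed equivalence; thus $\vf$ is not strong. I expect the only genuine work to be the first paragraph---securing the partition property of $\{\lb(L_k(\af))\}_{k\in K}$ and hence the equivalence $\vf(\af)\in U(FI(X,K))\iff\lb(L_0(\af))=\emptyset$---after which both implications are immediate; all the ingredients needed for it are already available in \cref{lb-separating,union-lb(L_k(f))=X,vf(f)_D=sum-k-e_lb(L_k),lb-preserves-diff-and-cap}.
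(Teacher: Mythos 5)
Your proof is correct and takes essentially the same approach as the paper's: both read off the diagonal of $\vf(\af)$ from \cref{vf(f)=sum-f(x_x)e_lb(x)+psi(f_J)} to obtain $\vf(\af)\in U(FI(X,K))\iff\lb(L_0(\af))=\emptyset$, and both reduce injectivity of $\lb$ to the condition $\lb(A)=\emptyset\impl A=\emptyset$. The only differences are presentational: where the paper cites \cite[Lemma 5.3]{Monk} for this reduction, you verify it inline via the symmetric-difference trick, and your test element $e_{X\sm C}$ is exactly the paper's $\dl-e_A$.
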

	\begin{proof}
		Observe by \cite[Lemma 5.3]{Monk} that $\lb$ is injective if and only if for any $A\in \cP(X)$:
		\begin{align}\label{lb(A)-empty=>A-empty}
			\lb(A)=\emptyset\impl A=\emptyset.
		\end{align}
		
		Let $\vf$ be strong. Assume that $\lb(A)=\emptyset$ for some $A\sst X$. Then $\vf(e_A)\in J(FI(X,K))$, so $\vf(\dl-e_A)=\dl-\vf(e_A)\in U(FI(X,K))$. Since $\vf$ is strong, $\dl-e_A\in U(FI(X,K))$, whence $A=\emptyset$.
		
		Conversely, assume \cref{lb(A)-empty=>A-empty}. If $\vf(\af)\in U(FI(X,K))$, then $\lb(L_0(\af))=\emptyset$ by \cref{vf(f)=sum-f(x_x)e_lb(x)+psi(f_J)}. Hence, $L_0(\af)=\emptyset$ thanks to \cref{lb(A)-empty=>A-empty}, i.e. $\af\in U(FI(X,K))$. Thus, $\vf$ is strong.
	\end{proof}
	
	The next easy example shows that a strong invertibility preserver of $FI(X,K)$ need not be bijective even if $X$ is finite.
	\begin{exm}
		For any $X$ and $K$ the map $\af\mapsto \af_D$ is a unital linear strong invertibility preserver $FI(X,K)\to FI(X,K)$ that is neither injective nor surjective unless $X$ is an anti-chain. The corresponding $\lb:\cP(X)\to\cP(X)$ is the identity map. 
	\end{exm}
	
	\begin{cor}\label{bij-strong-|K|>2}
		Under the conditions of \cref{inv-pres-for-|K|>2}, the unital linear invertibility preserver $\vf$ is bijective and strong if and only if
		\begin{enumerate}
			\item\label{lb-automorphism} the associated map $\lb$ is an automorphism of the Boolean algebra $\cP(X)$,
			\item\label{psi-bijection-of-J} $\psi$ maps bijectively $J(FI(X,K))$ onto $J(FI(X,K))$.
		\end{enumerate}
	\end{cor}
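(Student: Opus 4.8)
The plan is to read $\vf$ through the canonical decomposition $FI(X,K)=D(X,K)\oplus J(FI(X,K))$. By \cref{inv-pres-for-|K|>2} we may write $\vf(\af)=\vf(\af)_D+\psi(\af)$ with $\vf(\af)_D=\sum_{k\in K}ke_{\lb(L_k(\af))}$, where (by \cref{vf(f)_D-is-vf(f_D)_D}) this diagonal part depends only on $\af_D$, while $\psi(\af)\in J(FI(X,K))$. The one structural fact I will use repeatedly is that, since $\lb$ is a $|K|$-complete endomorphism, the family $\{\lb(L_k(\af))\}_{k\in K}$ is again a partition of $X$ (\cref{f-complete-iff-preserves-partitions}); hence whenever the diagonal of $\vf(\af)$ coincides entrywise with a diagonal element $\sum_{k}ke_{L_k}$, the two partitions agree block by block, i.e.\ $\lb(L_k(\af))=L_k$ for all $k$. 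This matching-of-partitions principle is what I will invoke to pass between $\vf(\af)_D$ and a prescribed target.

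For the forward implication I would first note that strongness gives injectivity of $\lb$ by \cref{vf-strong<=>lb(A)-nonempty}, so only surjectivity of $\lb$ and bijectivity of $\psi$ on $J(FI(X,K))$ remain. To obtain \ref{lb-automorphism}, fix $B\sst X$ and pick $k\in K\sm\{0,1\}$; surjectivity of $\vf$ produces $\af$ with $\vf(\af)=ke_B+e_{X\sm B}$. This forces $\psi(\af)=0$ and $\vf(\af)_D=ke_B+e_{X\sm B}$, and the matching-of-partitions principle yields $\lb(L_k(\af))=B$, so $B\in\im\lb$ and $\lb$ is onto. For \ref{psi-bijection-of-J} the key observation is that injectivity of $\lb$ makes any $\af$ with $\vf(\af)\in J(FI(X,K))$ satisfy $\af_D=0$: indeed $\vf(\af)_D=0$ gives $\lb(L_0(\af))=X=\lb(X)$, hence $L_0(\af)=X$. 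Consequently the restriction of $\vf$ to $J(FI(X,K))$ coincides with $\psi|_{J(FI(X,K))}$ (as $\vf(\af_J)_D=0$), and injectivity (resp.\ surjectivity) of $\vf$ transfers directly to injectivity (resp.\ surjectivity) of $\psi|_{J(FI(X,K))}$.

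For the converse I would again start from \cref{vf-strong<=>lb(A)-nonempty}: an automorphism $\lb$ is injective, so $\vf$ is strong. Injectivity of $\vf$ then follows as above, since $\vf(\af)=0$ forces $\af_D=0$ (injectivity of $\lb$) and hence $\psi(\af_J)=0$, which forces $\af_J=0$ (injectivity of $\psi$ on $J(FI(X,K))$). For surjectivity, given a target $\gm=\gm_D+\gm_J$ I first reconstruct the diagonal by setting $\af_D=\sum_{k}ke_{\lb\m(L_k(\gm_D))}$ — legitimate because the automorphism $\lb\m$ of the complete algebra $\cP(X)$ carries the partition $\{L_k(\gm_D)\}_{k}$ to a partition of $X$ — which gives $\vf(\af_D)_D=\gm_D$. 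It then remains to correct the radical part: since $\psi|_{J(FI(X,K))}$ is onto, I choose $\af_J\in J(FI(X,K))$ with $\psi(\af_J)=\gm_J-\psi(\af_D)$, and then $\af=\af_D+\af_J$ satisfies $\vf(\af)=\gm$.

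The main obstacle, and the only genuinely delicate point, is the interference of the two ``channels'' in the surjectivity argument: the diagonal term $\lb(L_k(\af))$ is governed by $\lb$, whereas the radical term $\psi(\af)$ receives contributions from \emph{both} $\af_D$ and $\af_J$, so the stray summand $\psi(\af_D)$ must be absorbed using the freedom in $\af_J$. This is precisely why one needs $\psi$ to be onto all of $J(FI(X,K))$, and not merely onto $\psi(J(FI(X,K)))$. Everything else reduces to the matching-of-partitions principle, whose validity rests on the $|K|$-completeness of $\lb$ together with its injectivity.
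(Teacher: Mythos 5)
Your proposal is correct and follows essentially the same route as the paper: both directions rest on the normal form \cref{vf(f)=sum-f(x_x)e_lb(x)+psi(f_J)}, strongness is handled via \cref{vf-strong<=>lb(A)-nonempty}, surjectivity of $\lb$ is obtained by producing a preimage of a diagonal target (the paper uses $e_B$ where you use $ke_B+e_{X\sm B}$, an immaterial variation), $\vf|_{J(FI(X,K))}$ is identified with $\psi|_{J(FI(X,K))}$, and the converse surjectivity uses exactly the paper's construction $\af_D=\sum_{k\in K}ke_{\lb\m(L_k(\bt))}$ with the radical correction $\af_J=\psi\m(\bt_J-\psi(\af_D))$. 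Your explicit use of injectivity of $\lb$ to show that any $\vf$-preimage of an element of $J(FI(X,K))$ lies in $J(FI(X,K))$ merely spells out what the paper states tersely.
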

	\begin{proof}
		Let $\vf$ be bijective and strong. By \cref{vf-strong<=>lb(A)-nonempty} the map $\lb$ is injective. Given $B\in\cP(X)$, there exists $\af\in FI(X,K)$ such that $\vf(\af)=e_B$. By \cref{vf(f)=sum-f(x_x)e_lb(x)+psi(f_J)} we have $\af_D=e_A$ and $\lb(A)=B$ for some $A\in\cP(X)$. Thus, $\lb$ is surjective, giving \cref{lb-automorphism}. Since $\vf|_{J(FI(X,K))}=\psi|_{J(FI(X,K))}:J(FI(X,K))\to J(FI(X,K))$ by \cref{vf(f)=sum-f(x_x)e_lb(x)+psi(f_J)} and $\vf(J(FI(X,K)))=J(FI(X,K))$ by \cref{lb-automorphism,vf(f)=sum-f(x_x)e_lb(x)+psi(f_J)}, we get \cref{psi-bijection-of-J}. 
		
		Conversely, assume \cref{psi-bijection-of-J,lb-automorphism}. By \cref{vf-strong<=>lb(A)-nonempty} the preserver $\vf$ is strong. If $\vf(\af)=0$, then by \cref{lb-automorphism,vf(f)=sum-f(x_x)e_lb(x)+psi(f_J)} we see that $\af\in J(FI(X,K))$ and $\psi(\af)=0$. Hence, $\af=0$ by \cref{psi-bijection-of-J}. For the surjectivity of $\vf$, take $\bt\in FI(X,K)$. Then $\af\in FI(X,K)$ with $\af_D=\sum_{k\in K}ke_{\lb\m(L_k(\bt))}$ and $\af_J=\psi\m(\bt_J-\psi(\af_D))$ is such that $\vf(\af)=\bt$ by \cref{vf(f)=sum-f(x_x)e_lb(x)+psi(f_J)}.
	\end{proof}
	
	Analysing the proof of \cref{bij-strong-|K|>2}, one can observe the following.
	\begin{rem}\label{surj-strong-|K|>2}
		\cref{bij-strong-|K|>2} will still hold, if we replace ``$\vf$ is bijective'' by ``$\vf$ is surjective'' and \cref{psi-bijection-of-J} by the weaker condition $\psi(J(FI(X,K)))=J(FI(X,K))$.
	\end{rem}
	
	\begin{exm}\label{inj-non-surj-lb}
		Let $X=\NN$ with the usual order and $K$ be an arbitrary field. For any $\af\in FI(\NN,K)$ define 
		\begin{align*}
			\vf(\af)=\sum_{n\in\NN}\af_{nn}e_{\{2n,2n+1\}}+\af_J.
		\end{align*}
		It is clear that $\vf:FI(\NN,K)\to FI(\NN,K)$ is an injective non-surjective unital linear strong invertibility preserver. The corresponding $\lb:\cP(\NN)\to\cP(\NN)$ is non-surjective, as $|\lb(A)|=2|A|$ for all $A\in\cP(\NN)$.
		
		On the other hand, if we define
		\begin{align*}
			\vf(\af)=\af_D+\eta(\af_J),
		\end{align*}
		where $\eta:J(FI(\NN,K))\to J(FI(\NN,K))$ is any surjective non-injective linear map (such a map exists as $\dim_K(J(FI(\NN,K)))=\infty$), then $\vf$ is a surjective non-injective unital linear strong invertibility preserver. The corresponding $\lb:\cP(\NN)\to\cP(\NN)$ is the identity map.
	\end{exm}

	The next fact, which makes \cref{bij-strong-|K|>2}\cref{lb-automorphism} more precise, is probably well-known (and easy to prove), but we could not find a reference.
	\begin{rem}\label{auto-P(X)}
		The automorphisms of the Boolean algebra $\cP(X)$ are exactly the maps $\cP(X)\to \cP(X)$ of the form 
		\begin{align}\label{lb(A)=set-eta(x)-x-in-A}
			\lb(A)=\{\mu(x)\mid x\in A\},    
		\end{align}
		where $\mu$ is a bijection $X\to X$. In particular, they are automatically $\kp$-complete for any $\kp$.
		
		Indeed, it is obvious that any map of the form \cref{lb(A)=set-eta(x)-x-in-A} is an automorphism of $\cP(X)$. Conversely, any automorphism $\lb$ of $\cP(X)$ maps the atoms of $\cP(X)$ to the atoms of $\cP(X)$, which induces a bijection $\mu:X\to X$ such that $\lb(\{x\})=\{\mu(x)\}$ for all $x\in X$. Given $A\in \cP(X)$ and $x\in X$, we have $x\in A\iff \{x\}\sst A\iff \{\mu(x)\}\sst\lb(A)\iff \mu(x)\in \lb(A)$, showing \cref{lb(A)=set-eta(x)-x-in-A}.
	\end{rem}
	
	\begin{rem}\label{bijective-is-strong}
		If $|\{(x,y)\mid x<y\}|<\infty$, then any unital bijective linear invertibility preserver $\vf$ of $FI(X,K)$ is automatically strong.
		
		For, in this case $\dim_K(J(FI(X,K)))<\infty$, so $\vf(J(FI(X,K)))=J(FI(X,K))$ in view of \cref{vf-maps-J-to-J} and the bijectivity of $\vf$. Hence, $\vf(e_A)_D=e_{\lb(A)}\ne 0$ for any nonempty $A\in\cP(X)$, and thus $\lb(A)\ne\emptyset$, i.e. $\lb$ is injective.
	\end{rem}
	
	The following example shows that, whenever $|\{(x,y)\mid x<y\}|=\infty$, non-strong unital bijective invertibility preservers of $FI(X,K)$ can exist.
	\begin{exm}\label{exm-bijective-non-strong}
		Let $X=\NN$ with the usual order and $K$ be an arbitrary field. Extend $e_{01}\in J(FI(\NN,K))$ to a $K$-basis $B$ of $J(FI(\NN,K))$. Since $\dim_K(J(FI(\NN,K)))=\infty$, we have $|B|=|B\setminus\{e_{01}\}|$. Denote $C=\Span_K(B\setminus\{e_{01}\})$ and let $\eta:J(FI(\NN,K))\to C$ be an isomorphism of $K$-spaces. Define
		\begin{align}\label{vf(f)=sum-f(n_n)e_n-1+f(0_0)e_01+eta(f_J)}
			\vf(\af)=\sum_{n=1}^\infty \af_{nn}e_{n-1}+\af_{00}e_{01}+\eta(\af_J).
		\end{align}
		It is easy to see that $\vf$ is a linear invertibility preserver of $FI(\NN,K)$. 
		Moreover, $\vf$ is a bijection whose inverse is
		\begin{align*}
			\vf\m(\bt)=\sum_{n=1}^\infty \bt_{n-1,n-1}e_n+\bt_{01}e_0+\eta\m(\bt_J-\bt_{01}e_{01}).
		\end{align*}
		However, $\vf$ is not strong because $\vf(e_{\NN\setminus\{0\}})=\dl$. Then $\psi(\af)=\vf(\dl)\m\vf(\af)$ is a unital bijective linear invertibility preserver that is not strong.
	\end{exm}
	
	\begin{rem}
		\cref{exm-bijective-non-strong} obviously generalizes to any $X$ with $|\{(x,y)\mid x<y\}|=\infty$ (replace in \cref{vf(f)=sum-f(n_n)e_n-1+f(0_0)e_01+eta(f_J)} the elements $0$ and $1$ by a fixed pair $x_0<y_0$ from $X$ and $e_{n-1}$ by $e_{\mu(x)}$, where $\mu$ is a bijection $X\setminus\{x_0\}\to X$).
	\end{rem}
	
		
	
	\subsection{Connection with the known results}\label{sec-connection}
	
	The following remark establishes a connection between \cref{inv-pres-for-|K|>2} and \cite[Theorem 3.3]{ChooiLim98}.
	\begin{rem}\label{inv-pres-|X|<infty}
		Let $|X|<\infty$ and $K$ be an arbitrary field. Then any endomorphism $\lb$ of $\cP(X)$ is $|K|$-complete and is uniquely determined by the partition $\{\lb(\{x\})\}_{x\in X}$ of $X$ (and conversely: any partition $\{A_x\}_{x\in X}$ of $X$ of cardinality $|X|$ defines an endomorphism $\lb$ of $\cP(X)$ by means of $\lb(Y)=\bigsqcup_{y\in Y}A_y$, $Y\in\cP(X)$). 
		
		Assume moreover that $|K|>2$. Given a unital linear invertibility preserver $\vf:FI(X,K)\to FI(X,K)$, let $\lb$ be the corresponding endomorphism of $\cP(X)$. For any $y\in X$ there is a unique $x\in X$ such that $y\in\lb(\{x\})$. Then for any $\af\in FI(X,K)$ by \cref{vf(f)=sum-f(x_x)e_lb(x)+psi(f_J)} we have 
		\begin{align}\label{vf(f)(y_y)=f(x_x)}
			\vf(\af)_{yy}=\af_{xx},    
		\end{align}
		because $x\in L_{\af_{xx}}(\af)$ and $y\in\lb(\{x\})\sst \lb(L_{\af_{xx}}(\af))$. If we drop the assumption that $\vf$ is unital and apply the above argument to $\psi=\vf(\dl)\m\vf$, then we will get $\vf(\dl)_{yy}\af_{xx}$ on the right-hand side of \cref{vf(f)(y_y)=f(x_x)}. On the other hand, if we additionally assume that $\vf$ is strong, then $\lb$ is an automorphism of $\cP(X)$ by \cref{vf-strong<=>lb(A)-nonempty} (since $|X|<\infty$), so $\lb(\{x\})=\{\mu(x)\}$ for some bijection $\mu$ of $X$ by \cref{auto-P(X)}. It follows that
		\begin{align}\label{vf(f)(mu(y)_mu(y))=f(x_x)}
			\vf(\af)_{\mu(x)\mu(x)}=\af_{xx}    
		\end{align}
		for all $x\in X$. And in the non-unital case one would have $\vf(\dl)_{\mu(x)\mu(x)}\af_{xx}$ on the right-hand side of \cref{vf(f)(mu(y)_mu(y))=f(x_x)}. Thus, we get \cite[Theorem 3.3]{ChooiLim98} as a particular case.
	\end{rem}
	
	Now, we would like to see how our results agree with Theorem 1.1, Lemma 3.2 and Theorem 3.3 from \cite{Slowik15}.
	\begin{rem}\label{connection-with-Slowik}
		Let $X$ be countably infinite. If $|K|=\infty$, then $|K|$-complete endomorphisms of $\cP(X)$ are uniquely  determined by the corresponding partitions $\{\lb(\{x\})\}_{x\in X}$ of $X$ as in \cref{inv-pres-|X|<infty}. In this case, for any unital linear invertibility preserver $\vf:FI(X,K)\to FI(X,K)$, formula \cref{vf(f)(y_y)=f(x_x)} still holds, as well as its version for a non-unital $\vf$. However, \cref{vf(f)(mu(y)_mu(y))=f(x_x)} and its ``non-unital'' generalization only remain valid under the extra assumption that the strong preserver $\vf$ is surjective (see \cref{surj-strong-|K|>2,inj-non-surj-lb}). Thus, we get Theorem~1.1, Lemma~3.2 and Theorem~3.3 from \cite{Slowik15} (where the necessary condition $\bigsqcup_{n\in\NN}\mu(n)=\NN$ appears to be missing) as particular cases.
		
		If $|K|<\infty$, then any endomorphism $\lb$ of $\cP(X)$ is $|K|$-complete, but it is not in general determined by $\lb(\{x\})$, since it may not be $\aleph_0$-complete as we have seen in \cref{non-complete-endo-P(X)}. Hence, one cannot use formula \cref{vf(f)(y_y)=f(x_x)} to calculate $\vf(\af)_D$. Moreover, taking $\lb$ from \cref{non-complete-endo-P(X)}, one can easily construct a unital linear invertibility preserver $\vf$ for which \cref{vf(f)(y_y)=f(x_x)} does not make sense, as $\bigsqcup_{x\in X}\lb(\{x\})\ne X$ (for example, use \cref{vf(f)=sum-f(x_x)e_lb(x)+psi(f_J)} with $\psi(\af)=\af_J$). Observe that such a $\vf$ will be strong because its $\lb$ is injective (if $A\ne\emptyset$, then there is $a\in A$, so $\emptyset\ne\{\mu(a)\}=\lb(\{a\})\sst \lb(A)$). Thus, we see that the statements of Theorem 1.1 and Theorem 3.3 from \cite{Slowik15} do not hold in this case.
	\end{rem}
	\subsection{Invertibility preservers that preserve the inverses}\label{sec-inverse-pres}
	
	Let $\vf:A\to B$ be an invertibility preserver. We say that $\vf$ \textit{preserves the inverses} or is an \textit{inverse preserver}, if for any $a\in U(A)$ one has $\vf(a\m)=\vf(a)\m$.
	
	\begin{prop}\label{vf-pres-inverses=>vf(1)vf-pres-idemp}
		Let $A$ and $B$ be $K$-algebras, where $\ch(K)\ne 2$. If a linear invertibility preserver $\vf:A\to B$ preserves the inverses, then $\psi=\vf(1_A)\vf$ preserves the idempotents. 
	\end{prop}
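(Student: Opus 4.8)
The plan is to reduce the idempotency of $\psi(e)$ to a commutation relation and then to extract that relation from inverse-preservation applied to a one-parameter family of units. First I would record the basic consequence of inverse-preservation at the identity: since $1_A\m=1_A$, we get $\vf(1_A)\m=\vf(1_A)$, i.e. $t:=\vf(1_A)$ is an involution, $t^2=1_B$; in particular $\psi=t\vf$ is unital. Now fix an idempotent $e\in A$. Because $\ch(K)\ne 2$, the element $s:=1_A-2e$ is an involution ($s^2=1_A$, $s\m=s$) and $e=\tfrac12(1_A-s)$; by inverse-preservation $u:=\vf(s)$ is again an involution, $u^2=1_B$. Writing $b:=\vf(e)=\tfrac12(t-u)$, we get $\psi(e)=tb=\tfrac12 t(t-u)=\tfrac12(1_B-tu)=\tfrac12(1_B-\psi(s))$, and an element of the form $\tfrac12(1_B-w)$ is idempotent precisely when $w^2=1_B$; hence $\psi(e)$ is idempotent if and only if $(tu)^2=1_B$, equivalently (as $t,u$ are involutions) if and only if $t$ and $u$ commute, equivalently if and only if $t=\vf(1_A)$ and $b=\vf(e)$ commute.

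So the whole statement comes down to proving $tb=bt$. Note that the involution relation $u^2=1_B$ alone only yields, after expansion and division by $2$, the \emph{additive} identity $tb+bt=2b^2$, which is not enough. To obtain the multiplicative information I would feed more units into $\vf$: for every $\lb\in K\sm\{0\}$ the element $a_\lb:=1_A+(\lb-1)e$ is invertible with $a_\lb\m=1_A+(\lb\m-1)e$. Applying $\vf$, using linearity and $\vf(a_\lb\m)=\vf(a_\lb)\m$, gives $\big(t+(\lb-1)b\big)\big(t+(\lb\m-1)b\big)=1_B$, which together with $t^2=1_B$ simplifies to
\[(\lb\m-1)\,tb+(\lb-1)\,bt+(2-\lb-\lb\m)\,b^2=0\qquad\text{for all }\lb\in K\sm\{0\}.\]

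Multiplying by $\lb$ turns this into a single polynomial identity
\[\lb^2(bt-b^2)+\lb(2b^2-tb-bt)+(tb-b^2)=0\]
valid for every nonzero $\lb$. Comparing coefficients forces $bt=b^2$ and $tb=b^2$; in particular $tb=bt$, which is exactly the commutation relation needed above. Moreover, combining $tb=b^2$ with $t^2=1_B$ gives $b=t^2b=t\,b^2=(tb)b=b^3$, so $\psi(e)=tb=b^2$ and $\psi(e)^2=b^4=b\cdot b^3=b^2=\psi(e)$, confirming idempotency directly. The main obstacle is precisely this last extraction: passing from the additive involution identity to the multiplicative relation $tb=bt=b^2$, for which the single involution $1_A-2e$ is insufficient and one must exploit the full scaling family $a_\lb$ together with a coefficient comparison in $\lb$ (the step that genuinely uses that $K$ has enough elements). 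When $\vf$ is already unital one has $t=1_B$ and the involution relation alone gives $\vf(e)^2=\vf(e)$ at once, so the difficulty lies entirely in the non-unital normalization $\psi=\vf(1_A)\vf$.
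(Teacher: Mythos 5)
Your argument is correct for every field with at least four elements, and it takes a genuinely different route from the paper's. The paper works with a \emph{single} unit, $1_A+e$ with $(1_A+e)\m=1_A-\frac12 e$ (your $a_\lb$ at $\lb=2$), but exploits \emph{both} orders of the inverse relation, $\vf(1_A+e)\vf(1_A-\frac12 e)=1_B=\vf(1_A-\frac12 e)\vf(1_A+e)$; together with $\vf(1_A)^2=1_B$ this yields a $2\times 2$ linear system in the unknowns $\vf(e)\vf(1_A)$ and $\vf(1_A)\vf(e)$, which it solves to get $\vf(e)\vf(1_A)=\vf(1_A)\vf(e)=\vf(e)^2$ --- exactly your relations $bt=tb=b^2$. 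You instead use only one order of multiplication and compensate by varying $\lb$, extracting the coefficients of a quadratic identity by evaluation at distinct points. So your claim that the involution $1_A-2e$ is insufficient and that one ``must'' exploit the full scaling family is not accurate: a single non-involutive unit suffices if one uses the two-sidedness of inverses, which is the resource your derivation never invokes. A pleasant by-product of your route is that the closing verification ($\psi(e)=tb=b^2$, $b=t^2b=b^3$, $\psi(e)^2=b^4=b^2=\psi(e)$) divides by nothing: the hypothesis $\ch(K)\ne 2$ enters your write-up only through the dispensable reduction via $s=1_A-2e$, so your argument works verbatim even in characteristic $3$ (where, as noted below, the paper's does not) provided $|K|\ge 4$.

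The one gap is the coefficient comparison itself: a quadratic in $\lb$ vanishing on all of $K\sm\{0\}$ has zero coefficients only if $K$ contains at least three distinct nonzero scalars, i.e. $|K|\ge 4$; given $\ch(K)\ne 2$, this excludes exactly $K=\FF_3$, where your family degenerates ($\lb=1$ is vacuous and $\lb=-1$ returns only the additive relation $tb+bt=2b^2$, which you yourself flag as insufficient). You are in good company: the paper's $2\times2$ system has determinant $\frac34$, so its ``solving the system'' step silently divides by $3$ and likewise fails in characteristic $3$ --- and over $\FF_3$ the proposition is in fact \emph{false}. Take $A=\FF_3\times\FF_3$ with $e=(1,0)$, $B=M_2(\FF_3)$, and the linear map $\vf(x1_A+ye)=xt+yb$, where $t=e_{11}-e_{22}$ and $b=e_{12}$ in terms of matrix units $e_{ij}$. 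The units of $A$ are exactly $\pm 1_A$ and $\pm(1_A+e)$, each equal to its own inverse, and their images $\pm t$, $\pm(t+b)$ are involutions in $B$ (since $t^2=1_B$ and $tb+bt=b^2=0$, whence $(t+b)^2=1_B$), so $\vf$ preserves invertibility and inverses; yet $\psi(e)=\vf(1_A)\vf(e)=tb=e_{12}$ satisfies $\psi(e)^2=0\ne\psi(e)$. So the uncovered case of your proof is a defect of the statement, not of your method: among fields with $\ch(K)\ne 2$, the proposition holds precisely when $K\ne\FF_3$, and your proof covers all of those fields while the paper's covers only $\ch(K)\notin\{2,3\}$.
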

	\begin{proof}
		First observe that $\vf(1_A)=\vf(1_A)\m$ and hence 
		\begin{align}\label{vf(1_A)^2=1_B}
			\vf(1_A)^2=1_B.
		\end{align}
		Take $e\in E(A)$. Then $1_A+e\in U(A)$ with $(1_A+e)\m=1_A-\frac 12e$. Therefore,
		\begin{align}
			(\vf(1_A)+\vf(e))(\vf(1_A)-\frac 12\vf(e))=(\vf(1_A)-\frac 12\vf(e))(\vf(1_A)+\vf(e))=1_B,
		\end{align} 
		which in view of \cref{vf(1_A)^2=1_B} gives
		\begin{align}\label{vf(e)vf(q)-half.vf(1)vf(a)=half.vf(e)^2}
			\vf(e)\vf(1_A)-\frac 12\vf(1_A)\vf(e)=\frac 12\vf(e)^2=\vf(1_A)\vf(e)-\frac 12\vf(e)\vf(1_A).
		\end{align}
		Solving the system \cref{vf(e)vf(q)-half.vf(1)vf(a)=half.vf(e)^2} in $\vf(e)\vf(1_A)$ and $\vf(1_A)\vf(e)$, we get
		\begin{align}\label{vf(e)vf(1)=vf(1)vf(e)=vf(e)^2}
			\vf(e)\vf(1_A)=\vf(1_A)\vf(e)=\vf(e)^2.
		\end{align}
		Thus,
		\begin{align*}
			\psi(e)^2=\vf(1_A)\vf(e)\vf(1_A)\vf(e)=\vf(1_A)^2\vf(e)^2=\vf(e)^2=\vf(1_A)\vf(e)=\psi(e)
		\end{align*}
		by \cref{vf(e)vf(1)=vf(1)vf(e)=vf(e)^2,vf(1_A)^2=1_B}, so $\psi$ preserves the idempotents.
	\end{proof}
	
	\begin{cor}\label{vf-unital-pres-inverses=>vf-pres-idemp}
		Under the conditions of \cref{vf-pres-inverses=>vf(1)vf-pres-idemp}, if $\vf$ is unital, then $\vf$ preserves the idempotents.
	\end{cor}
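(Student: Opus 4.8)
The plan is to observe that the unitality assumption collapses $\psi$ into $\vf$ itself, after which the conclusion is immediate from the preceding proposition. First I would note that $\vf$ being unital means precisely $\vf(1_A)=1_B$. Substituting this into the definition $\psi=\vf(1_A)\vf$ from \cref{vf-pres-inverses=>vf(1)vf-pres-idemp} gives $\psi=1_B\vf=\vf$. Since that proposition already guarantees that $\psi$ preserves the idempotents, and here $\psi=\vf$, it follows at once that $\vf$ preserves the idempotents.

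The only point worth checking is that the hypotheses of \cref{vf-pres-inverses=>vf(1)vf-pres-idemp} are in force, which they are verbatim: we work under exactly the same conditions, namely $A$ and $B$ are $K$-algebras with $\ch(K)\ne 2$, and $\vf$ is a linear invertibility preserver that preserves the inverses. Hence there is no genuine obstacle; the corollary reduces to a one-line substitution into the conclusion of the proposition.
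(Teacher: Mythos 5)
Your proof is correct and is exactly the argument the paper leaves implicit (the corollary is stated without proof): unitality gives $\vf(1_A)=1_B$, so $\psi=\vf(1_A)\vf=\vf$, and \cref{vf-pres-inverses=>vf(1)vf-pres-idemp} applies verbatim. Nothing further is needed.
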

	
	\begin{cor}\label{vf-pres-inverses=>vf-Jordan-homo}
		Let $|X|<\infty$ and $\ch(K)\ne 2$. Then $\vf:FI(X,K)\to FI(X,K)$ is a linear unital inverse preserver if and only if $\vf$ is a unital Jordan endomorphism. Thus, in this case $\vf$ is a near sum~\cite{Benkovic05} of an endomorphism and an anti-endomorphism of $FI(X,K)$ by \cite[Theorem 2.1]{Akkurts-Barker}.
	\end{cor}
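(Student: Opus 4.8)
The plan is to prove the equivalence in both directions. For the forward implication I would first invoke \cref{vf-unital-pres-inverses=>vf-pres-idemp} to reduce the problem to idempotent preservers, and then show that a unital idempotent-preserving linear map on $FI(X,K)$ is a Jordan endomorphism; the backward implication is a direct computation that works over any unital $K$-algebra with $\ch(K)\ne2$.

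I would start with the backward direction, as it is self-contained. Assume $\vf$ is a unital Jordan endomorphism, fix $\af\in U(FI(X,K))$ and set $u=\vf(\af)$ and $v=\vf(\af\m)$. Being a Jordan endomorphism, $\vf$ satisfies the triple-product identity $\vf(\gm_1\gm_2\gm_1)=\vf(\gm_1)\vf(\gm_2)\vf(\gm_1)$ (a standard consequence of $\vf(\gm^2)=\vf(\gm)^2$ obtained by linearization, legitimate since $\ch(K)\ne2$); taking $\gm_1=\af,\ \gm_2=\af\m$ and then $\gm_1=\af\m,\ \gm_2=\af$ gives $uvu=u$ and $vuv=v$. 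Applying $\vf$ to $\af\circ\af\m=2\dl$ and using $\vf(\dl)=\dl$ gives $uv+vu=2\dl$. Now $p:=uv$ satisfies $p^2=(uvu)v=uv=p$ and, symmetrically, $q:=vu$ is idempotent; from $p+q=2\dl$ together with $p^2=p$ one deduces $q=\dl$ (this is exactly where $\ch(K)\ne2$ enters), hence $uv=vu=\dl$, i.e. $\vf(\af)$ is invertible with $\vf(\af)\m=v=\vf(\af\m)$, so $\vf$ is an inverse preserver.

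For the forward direction, let $\vf$ be a linear unital inverse preserver. Since $\ch(K)\ne2$ forces $|K|>2$, \cref{vf-unital-pres-inverses=>vf-pres-idemp} applies and shows that $\vf$ preserves idempotents, i.e. $\vf(p)^2=\vf(p)$ for every idempotent $p$. It remains to promote this to the Jordan identity. As $|X|<\infty$, the algebra $FI(X,K)$ is linearly spanned by the idempotents $e_x$ and $e_x+e_{xy}$ ($x<y$), so it suffices to verify the polarized identity $\vf(p\circ q)=\vf(p)\circ\vf(q)$ for idempotents $p,q$. The main tool is the observation that for any idempotent $p$ and any $\af\in FI(X,K)$ the elements $p+p\af(\dl-p)$ and $p+(\dl-p)\af p$ are again idempotent (their off-diagonal corrections square to zero). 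Applying $\vf$, writing $P=\vf(p)$ and $B=\vf(p\af(\dl-p))$, the idempotency of the image expands to $PB+BP+B^2=B$; replacing $\af$ by $t\af$ and comparing the coefficients of $t$ and $t^2$---possible because $\ch(K)\ne2$ gives $|K|\ge3$---yields $P\circ B=B$ and $B^2=0$, and symmetrically for $(\dl-p)\af p$. Feeding these relations into the Peirce decomposition of $FI(X,K)$ relative to $p$ should then assemble the full Jordan identity on all pairs of idempotents.

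The main obstacle is precisely this last bookkeeping: organizing the off-diagonal relations above together with the behaviour of the diagonal Peirce corners $pFI(X,K)p$ and $(\dl-p)FI(X,K)(\dl-p)$ into the single identity $\vf(p\circ q)=\vf(p)\circ\vf(q)$ for arbitrary idempotents $p,q$; the finiteness of $X$ is used both to guarantee the genuine finite spanning by idempotents and to keep all the linearizations inside honest finite sums. Once $\vf$ is known to be a unital Jordan endomorphism, \cite[Theorem~2.1]{Akkurts-Barker} expresses it as a near sum, in the sense of \cite{Benkovic05}, of an endomorphism and an anti-endomorphism of $FI(X,K)$, which gives the final assertion.
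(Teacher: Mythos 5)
Your backward direction is correct and self-contained, and it replaces rather than reproduces the paper's argument: the paper disposes of the ``if'' part with a single citation of \cite[Proposition 1.3]{Sourour96}, whereas you prove it by hand via the triple-product identity, getting $uvu=u$, $vuv=v$ and $uv+vu=2\dl$, and then forcing $uv=vu=\dl$. One small imprecision: from $p+q=2\dl$ and $p^2=p$ alone one cannot conclude $q=\dl$; the computation $(2\dl-p)^2=2\dl-p$ uses the idempotency of \emph{both} $p=uv$ and $q=vu$. Since you did establish both, the ingredients are all present and this part stands.

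The forward direction, however, has a genuine gap. You reduce to idempotent preservation via \cref{vf-unital-pres-inverses=>vf-pres-idemp} exactly as the paper does, but the paper then closes the argument by citing \cite[Proposition 2.1]{GK} for the statement that a unital linear idempotent preserver of $FI(X,K)$ with $|X|<\infty$ is a Jordan homomorphism --- a substantial result in its own right, not a routine verification. Your sketch supplies only the easy off-diagonal relations: from the idempotency of $p+p\af(\dl-p)$ you obtain, with $P=\vf(p)$ and $B=\vf(p\af(\dl-p))$, that $P\circ B=B$ and $B^2=0$ (and the scaling argument is legitimate, since $\ch(K)\ne 2$ guarantees two distinct nonzero scalars). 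But these relations say nothing about the diagonal Peirce corners: $p+p\af p$ is \emph{not} idempotent in general, so the trick never touches $pFI(X,K)p$ and $(\dl-p)FI(X,K)(\dl-p)$, and the polarized identity $\vf(p\circ q)=\vf(p)\circ\vf(q)$ for a genuinely interacting pair such as $p=e_x+e_{xy}$, $q=e_y+e_{yz}$ (where $p\circ q=e_{xy}+e_{xz}$) requires further structure --- orthogonality of images of orthogonal idempotents, multiplicativity on mixed Peirce components, and an assembly argument adapted to the incidence-algebra setting. Your own wording concedes the point: ``should then assemble'' is a plan, not a proof. To complete the argument you must either carry out this Bre\v{s}ar--\v{S}emrl-type analysis in full for $FI(X,K)$ or cite a result that does it, as the paper does with \cite[Proposition 2.1]{GK}.
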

	\begin{proof}
		The ``only if'' part follows from \cref{vf-unital-pres-inverses=>vf-pres-idemp} and \cite[Proposition 2.1]{GK}. The ``if'' part is by \cite[Proposition 1.3]{Sourour96}.
	\end{proof}
	
	The following corollary is a finite-dimensional analogue of \cite[Theorem 1.2]{Slowik15}.
	\begin{cor}\label{vf-pres-inverses=>vf-pm-auto-or-anti-auto}
		Let $X$ be a finite connected poset and $\ch(K)\ne 2$. Then $\vf$ is a bijective linear inverse preserver if and only if $\vf$ is either a $\pm$automorphism of $FI(X,K)$ or a $\pm$anti-automorphism of $FI(X,K)$.
	\end{cor}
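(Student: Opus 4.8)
The plan is to reduce the bijective (not necessarily unital) case to the unital case already settled in Corollary \ref{vf-pres-inverses=>vf-Jordan-homo}, and then to upgrade a bijective unital Jordan endomorphism of $FI(X,K)$ to a genuine ($\pm$)automorphism or ($\pm$)anti-automorphism using the connectedness of $X$. First I would observe that if $\vf$ is a bijective linear inverse preserver, then by \cref{vf(1_A)^2=1_B} we have $\vf(\dl)^2=\dl$, so $\vf(\dl)$ is an involution in $U(FI(X,K))$. The natural move is to set $\psi=\vf(\dl)\m\vf=\vf(\dl)\vf$; this is a bijective linear \emph{unital} invertibility preserver, and I would check that $\psi$ is again an inverse preserver (using $\vf(\dl)=\vf(\dl)\m$ together with $\vf(a\m)=\vf(a)\m$ and the fact, from \cref{vf(e)vf(1)=vf(1)vf(e)=vf(e)^2} applied suitably, that $\vf(\dl)$ commutes appropriately with the image of $\vf$). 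By \cref{vf-pres-inverses=>vf-Jordan-homo}, $\psi$ is then a unital Jordan endomorphism of $FI(X,K)$.

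Next I would use bijectivity: since $\vf$ is bijective and $\vf(\dl)$ is invertible, $\psi=\vf(\dl)\vf$ is a \emph{bijective} unital Jordan endomorphism, i.e. a Jordan automorphism of $FI(X,K)$. At this point the key classical input is that a Jordan homomorphism onto (or into, when combined with the near-sum description from \cite[Theorem 2.1]{Akkurts-Barker}) is, on an indecomposable/connected structure, forced to be a single homomorphism or a single anti-homomorphism rather than a nontrivial mixture. Concretely, the near sum of an endomorphism and an anti-endomorphism given by \cite[Theorem 2.1]{Akkurts-Barker} is indexed by a decomposition governed by the connected components of $X$; when $X$ is connected this decomposition is trivial, so $\psi$ must be either an automorphism or an anti-automorphism of $FI(X,K)$ (the bijectivity of $\psi$ promoting endo/anti-endo to auto/anti-auto). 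Translating back through $\vf=\vf(\dl)\psi=\vf(\dl)\m\psi$ and recalling $\vf(\dl)^2=\dl$, I would then argue that the involution $\vf(\dl)$ must be central, hence equal to $\pm\dl$ on each connected block, so that $\vf=\pm\psi$, yielding a $\pm$automorphism or $\pm$anti-automorphism. The converse direction is routine: any $\pm$automorphism or $\pm$anti-automorphism of $FI(X,K)$ is visibly a bijective linear map sending $\af\m$ to $\vf(\af)\m$ (the sign squares away), so it is a bijective inverse preserver.

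The main obstacle I anticipate is the promotion step and the handling of the sign. Showing that a bijective unital Jordan \emph{endomorphism} is actually a Jordan \emph{automorphism}, and then that connectedness of $X$ rules out a genuinely mixed near sum, requires extracting from \cite[Theorem 2.1]{Akkurts-Barker} that the endomorphism/anti-endomorphism pieces are supported on unions of connected components; with $X$ connected there is a single piece. The more delicate point is proving that the involution $\vf(\dl)$ is forced to be central (and hence $\pm\dl$ componentwise): here I would combine \cref{vf(e)vf(1)=vf(1)vf(e)=vf(e)^2}, which shows $\vf(\dl)$ commutes with every $\vf(e)$ for idempotent $e$, with the fact that $\psi$ is bijective and that idempotents span enough of $FI(X,K)$ (together with the structure of $U(FI(X,K))$) to conclude that $\vf(\dl)$ commutes with all of $FI(X,K)$. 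Since the center of $FI(X,K)$ over a connected $X$ is just $K\dl$, the involution $\vf(\dl)$ equals $\pm\dl$, closing the argument.
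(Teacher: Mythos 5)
Your plan is correct and, in its skeleton, coincides with the paper's proof: pass to $\psi=\vf(\dl)\vf$, show that $\psi$ is a Jordan automorphism, use connectedness of $X$ to force $\psi$ to be an automorphism or an anti-automorphism, and finally show that $\vf(\dl)$ is central, hence equal to $k\dl$ with $k^2=1$ by \cref{vf(1_A)^2=1_B}, so that $\vf=\pm\psi$; the centrality argument you sketch (commutation with every $\vf(e)$ via \cref{vf(e)vf(1)=vf(1)vf(e)=vf(e)^2}, the fact that $FI(X,K)$ is spanned by idempotents, surjectivity of $\vf$, and that the center of $FI(X,K)$ for connected $X$ is $K\dl$) is exactly the paper's, which cites \cite[Theorem 1.3.13]{SpDo} for the last ingredient. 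Two remarks on where you deviate. First, your route to ``$\psi$ is Jordan'' goes through verifying that $\psi$ preserves inverses so as to invoke \cref{vf-pres-inverses=>vf-Jordan-homo}; but that verification needs $\vf(\dl)$ to commute with the image of $\vf$, i.e. precisely the centrality fact you establish only at the end, so you must reorder: prove centrality first (it uses nothing Jordan-theoretic, only \cref{vf(e)vf(1)=vf(1)vf(e)=vf(e)^2} together with linearity and surjectivity of $\vf$), conclude $\vf(\dl)=\pm\dl$ immediately, and then $\psi=\pm\vf$ trivially preserves inverses. The paper bypasses this loop by applying \cref{vf-pres-inverses=>vf(1)vf-pres-idemp} directly: $\psi$ preserves idempotents, hence is a unital Jordan homomorphism, and bijectivity makes it a Jordan automorphism. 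Second, for the dichotomy you propose to extract from the near-sum theorem \cite[Theorem 2.1]{Akkurts-Barker} the claim that a mixed near sum cannot occur over connected $X$; that theorem's near-sum decomposition is taken with respect to the splitting $D(X,K)\oplus J(FI(X,K))$ and does not by itself localize the homomorphism/anti-homomorphism pieces to connected components, so ruling out a genuinely mixed bijective near sum would require a separate (nontrivial) analysis. The paper sidesteps this by citing \cite[Theorem 4.4]{GK}, which asserts outright that a Jordan automorphism of $FI(X,K)$ with $X$ connected is an automorphism or an anti-automorphism; you should either do the same or supply that missing analysis. With these adjustments your argument is complete, and your converse (the sign squares away) matches the paper's ``obvious'' direction.
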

	\begin{proof}
		Let $\vf$ be a bijective linear inverse preserver. By \cref{vf-pres-inverses=>vf(1)vf-pres-idemp} the map $\psi=\vf(\dl)\vf$ is a Jordan automorphism of $FI(X,K)$, so it is either an automorphism of $FI(X,K)$ or an anti-automorphism of $FI(X,K)$ by \cite[Theorem 4.4]{GK}. Since $FI(X,K)$ is spanned by idempotents (say, by $\{e_x\}_{x\in X}\sqcup\{e_x+e_{xy}\}_{x<y}$) and $\vf$ is bijective, it follows from \cref{vf(e)vf(1)=vf(1)vf(e)=vf(e)^2} that $\vf(\dl)$ is central, whence $\vf(\dl)=k\dl$ for some $k\in K$ by \cite[Theorem 1.3.13]{SpDo}. By \cref{vf(1_A)^2=1_B} we have $k=\pm 1$, so that $\psi=\pm\vf$.
		
		The converse is obvious.
	\end{proof}
	
	The following example shows that \cref{vf-pres-inverses=>vf-Jordan-homo} does not hold over $\Z_2$.
	\begin{exm}
		Let $X=\{1,2,3\}$ with the usual order. Consider the linear map $\vf:FI(X,\Z_2)\to FI(X,\Z_2)$ given by $\vf(e_1)=e_1$, $\vf(e_2)=e_1+e_2$, $\vf(e_3)=e_1+e_3$, $\vf(e_{12})=e_{12}$, $\vf(e_{13})=\vf(e_{23})=0$. Since $(\dl+e_{12})^2=(\dl+e_{13})^2=(\dl+e_{23})^2=(\dl+e_{12}+e_{13})^2=(\dl+e_{13}+e_{23})^2=(\dl+e_{12}+e_{23})(\dl+e_{12}+e_{13}+e_{23})=\dl$, it is easy to see that $\vf$ is a unital inverse preserver. However, $\vf(e_1\circ e_{12})=\vf(e_{12})=e_{12}\ne 0=(e_1+e_2)\circ e_{12}=\vf(e_1)\circ\vf(e_{12})$, so $\vf$ is not a Jordan endomorphism of $FI(X,\Z_2)$.
	\end{exm}
	
		
	\subsection{The case $K=\Z_2$}\label{sec-inv-pres-over-Z_2}
	Observe that linearity of maps over $\Z_2$ is the same as their additivity. Then it turns out that, whenever $K=\Z_2$, the map $\lb:\cP(X)\to \cP(X)$ from \cref{from-vf-to-lb} preserves a weaker structure on $\cP(X)$ than that of a Boolean algebra. Namely, recall that $\cP(X)$ is an elementary abelian $2$-group under the symmetric difference $A\tr B=(A\setminus B)\sqcup(B\setminus A)$. It is in fact isomorphic to the direct power $(\Z_2)^X$, i.e. the group of maps $X\to\Z_2$ under the coordinate-wise addition.
	
	\begin{lem}\label{lb-prese-symm-diff}
		Let $\vf:FI(X,\Z_2)\to FI(X,\Z_2)$ be a unital linear in\-ver\-ti\-bi\-li\-ty preserver and $\lb:\cP(X)\to \cP(X)$ the associated map as in \cref{from-vf-to-lb}. Then $\lb$ is an endomorphism of the abelian group $(\cP(X),\tr)$ with $\lb(X)=X$. 
	\end{lem}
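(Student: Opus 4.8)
The plan is to exploit the identification, noted just before the statement, of the symmetric difference $\tr$ on $\cP(X)$ with addition in the elementary abelian $2$-group $(\Z_2)^X$. The crucial observation is that over $\Z_2$ the diagonal idempotents combine according to $\tr$: since $2e_x=0$ for every $x\in X$, for any $A,B\sst X$ one has
\begin{align*}
	e_A+e_B=e_{A\tr B},
\end{align*}
because the coefficient of $e_x$ on the left equals $0$ precisely when $x\in A\cap B$ or $x\notin A\cup B$, i.e.\ exactly when $x\notin A\tr B$.

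First I would dispose of the claim $\lb(X)=X$. Because $\dl=e_X$ and $\vf$ is unital, we have $\vf(e_X)=\vf(\dl)=\dl=e_X$; passing to diagonal parts and using the defining relation \cref{vf(e_A)_D=e_lb(A)} gives $e_{\lb(X)}=\vf(e_X)_D=e_X$, whence $\lb(X)=X$.

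Next I would establish that $\lb$ respects $\tr$. Fix $A,B\sst X$. Applying the linear map $\vf$ to the identity $e_{A\tr B}=e_A+e_B$ yields $\vf(e_{A\tr B})=\vf(e_A)+\vf(e_B)$. Since the diagonal projection $\gm\mapsto\gm_D$ is itself $\Z_2$-linear, taking diagonal parts and invoking \cref{vf(e_A)_D=e_lb(A)} three times gives
\begin{align*}
	e_{\lb(A\tr B)}=\vf(e_A)_D+\vf(e_B)_D=e_{\lb(A)}+e_{\lb(B)}=e_{\lb(A)\tr\lb(B)},
\end{align*}
where the last equality applies the displayed identity once more, now with $\lb(A),\lb(B)$ in place of $A,B$. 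Comparing supports forces $\lb(A\tr B)=\lb(A)\tr\lb(B)$, so $\lb$ is an endomorphism of $(\cP(X),\tr)$.

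As for difficulty, there is essentially no obstacle: once one records that $\tr$ is literally addition of indicator functions over $\Z_2$, everything reduces to the linearity of $\vf$ together with the linearity of the diagonal projection and the defining property of $\lb$. The only point requiring care is that $\vf(e_A)$ may carry a nonzero strictly upper-triangular part, so one must genuinely work with the diagonal components $\vf(\cdot)_D$ throughout --- which is exactly the information that \cref{vf(e_A)_D=e_lb(A)} isolates.
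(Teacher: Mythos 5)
Your proof is correct and follows exactly the paper's route: both rest on the identity $e_A+e_B=e_{A\tr B}$ over $\Z_2$, the additivity of $\vf$, and the defining relation $\vf(e_A)_D=e_{\lb(A)}$, with $\lb(X)=X$ coming from $\dl=e_X$ and unitality. You merely spell out the diagonal-projection step that the paper leaves implicit.
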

	\begin{proof}
		Since $e_A+e_B=e_{A\tr B}$ for all $A,B\sst X$ and $\vf$ is additive, then $\lb(A\tr B)=\lb(A)\tr\lb(B)$. Moreover, $\dl=e_X$ implies $\lb(X)=X$.
	\end{proof}
	
	\begin{thrm}\label{inv-pres-over-Z_2}
		The unital linear invertibility preservers of $FI(X,\Z_2)$ are exactly the maps of the form 
		\begin{align}\label{vf(f)=e_lb(L_1(x))+psi(f)}
			\vf(\af)=e_{\lb(L_1(\af))}+\psi(\af),
		\end{align}
		where $\lb$ is an endomorphism of $(\cP(X),\tr)$ with $\lb(X)=X$ and $\psi:FI(X,\Z_2)\to J(FI(X,\Z_2))$ is a linear map annihilating $\dl$.
	\end{thrm}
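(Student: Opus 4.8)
The plan is to prove both implications, leaning on the machinery already developed for the general case but exploiting the special feature that over $\Z_2$ the diagonal of any $\af$ is encoded by the \emph{single} set $L_1(\af)$, since $\af_D=e_{L_1(\af)}$. This is why no completeness hypothesis on $\lb$ is needed and why the argument is shorter than in the $|K|>2$ setting.

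First I would verify that any map $\vf$ of the form \cref{vf(f)=e_lb(L_1(x))+psi(f)} is a unital linear invertibility preserver. The essential ingredient is the characteristic-$2$ identity $L_1(\af+\bt)=L_1(\af)\tr L_1(\bt)$, which holds because $(\af+\bt)_{xx}=1$ precisely when exactly one of $\af_{xx},\bt_{xx}$ equals $1$. Combining this with $e_A+e_B=e_{A\tr B}$ and the hypothesis that $\lb$ is a homomorphism of $(\cP(X),\tr)$, one obtains $e_{\lb(L_1(\af))}+e_{\lb(L_1(\bt))}=e_{\lb(L_1(\af)\tr L_1(\bt))}=e_{\lb(L_1(\af+\bt))}$; together with the linearity of $\psi$ this gives additivity of $\vf$, and since the scalars are $0$ and $1$, additivity is linearity. (One also checks $\vf(0)=0$ using $\lb(\emptyset)=\emptyset$, which holds as $\lb$ is a group homomorphism.) For invertibility, recall that $\af\in U(FI(X,\Z_2))$ if and only if every $\af_{xx}\ne 0$, i.e.\ if and only if $L_1(\af)=X$; then $\lb(L_1(\af))=\lb(X)=X$, so $\vf(\af)_D=e_X=\dl\in U(FI(X,\Z_2))$, whence $\vf(\af)\in U(FI(X,\Z_2))$. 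Finally $\vf(\dl)=e_{\lb(X)}+\psi(\dl)=\dl+\psi(\dl)$, so $\vf$ is unital exactly when $\psi(\dl)=0$.

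For the converse, let $\vf$ be a unital linear invertibility preserver and let $\lb$ be the map supplied by \cref{from-vf-to-lb}, so that $\vf(e_A)_D=e_{\lb(A)}$; by \cref{lb-prese-symm-diff} this $\lb$ is an endomorphism of $(\cP(X),\tr)$ with $\lb(X)=X$. The key step is to show $\vf(\af)_D=e_{\lb(L_1(\af))}$, and here the $\Z_2$ case is markedly simpler than the general one: since $\af_D=e_{L_1(\af)}$, \cref{vf(f)_D-is-vf(f_D)_D} gives directly
\[
\vf(\af)_D=\vf(\af_D)_D=\vf(e_{L_1(\af)})_D=e_{\lb(L_1(\af))},
\]
bypassing the case analysis needed in \cref{vf(f)_D=sum-k-e_lb(L_k)}. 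Setting $\psi(\af):=\vf(\af)_J$ then yields the desired form \cref{vf(f)=e_lb(L_1(x))+psi(f)}; this $\psi$ is linear (being the composite of the linear $\vf$ with the linear projection onto $J(FI(X,\Z_2))$), takes values in $J(FI(X,\Z_2))$, and annihilates $\dl$ since $\psi(\dl)=\vf(\dl)_J=\dl_J=0$.

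I expect no serious obstacle. In contrast with the $|K|>2$ situation, \cref{f-complete-iff-preserves-partitions} plays no role, because over $\Z_2$ the diagonal is governed by a single set rather than a $|K|$-indexed partition, so the only structure $\lb$ must carry is that of a group endomorphism of $(\cP(X),\tr)$ fixing $X$. The one place that genuinely uses the characteristic is the additivity identity $L_1(\af+\bt)=L_1(\af)\tr L_1(\bt)$ in the forward direction, together with the short diagonal computation in the converse; both are routine.
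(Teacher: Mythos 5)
Your proof is correct and follows essentially the same route as the paper's: the forward direction via the identity $e_{L_1(\af)\tr L_1(\bt)}=e_{L_1(\af+\bt)}$ combined with $\lb$ being a homomorphism of $(\cP(X),\tr)$, and the converse via \cref{vf(f)_D-is-vf(f_D)_D}, \cref{from-vf-to-lb} and \cref{lb-prese-symm-diff}, setting $\psi(\af)=\vf(\af)_J$. Your added remarks (checking $\vf(0)=0$ and $\psi(\dl)=\vf(\dl)_J=0$ explicitly) are harmless elaborations of steps the paper leaves implicit.
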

	\begin{proof}
		Let $\vf$ be given by \cref{vf(f)=e_lb(L_1(x))+psi(f)}. For all $\af,\bt\in FI(X,\Z_2)$ we have
		\begin{align}\label{vf(f)_D+vf(g)_D=e_(lb(L_1(f))-tr-lb(L_1(g)))}
			\vf(\af)_D+\vf(\bt)_D=e_{\lb(L_1(\af))}+e_{\lb(L_1(\bt))}=e_{\lb(L_1(\af))\tr \lb(L_1(\bt))}=e_{\lb(L_1(\af)\tr L_1(\bt))}.
		\end{align}
		Since 
		\begin{align*}
			e_{L_1(\af)\tr L_1(\bt)}=e_{L_1(\af)}+e_{L_1(\bt)}=\af_D+\bt_D=(\af+\bt)_D=e_{L_1(\af+\bt)},
		\end{align*}
		we conclude from \cref{vf(f)=e_lb(L_1(x))+psi(f),vf(f)_D+vf(g)_D=e_(lb(L_1(f))-tr-lb(L_1(g)))} that $\vf(\af+\bt)_D=\vf(\af)_D+\vf(\bt)_D$. Then
		\begin{align*}
			\vf(\af+\bt)=\vf(\af+\bt)_D+\psi(\af+\bt)=\vf(\af)_D+\vf(\bt)_D+\psi(\af)+\psi(\bt)=\vf(\af)+\vf(\bt),
		\end{align*}
		so $\vf$ is linear. If $\af\in U(FI(X,\Z_2))$, then $L_1(\af)=X$, so $\lb(L_1(\af))=X$, whence $\vf(\af)\in U(FI(X,\Z_2))$. Thus, $\vf$ is an invertibility preserver. It is unital, because $\psi(\dl)=0$.
		
		Conversely, let $\vf$ be a unital linear invertibility preserver of $FI(X,\Z_2)$. Then $\vf(\af)_D=\vf(\af_D)_D=\vf(e_{L_1(\af)})_D=e_{\lb(L_1(\af))}$ by \cref{vf(f)_D-is-vf(f_D)_D,from-vf-to-lb}, where $\lb$ is an endomorphism of $(\cP(X),\tr)$ with $\lb(X)=X$ by \cref{lb-prese-symm-diff}. As in the proof of \cref{inv-pres-for-|K|>2} we define $\psi(\af)=\vf(\af)_J$ and complete thus the proof of \cref{vf(f)=e_lb(L_1(x))+psi(f)}.
	\end{proof}
	
	We have a description of strong invertibility preservers of $FI(X,\Z_2)$ analogous to that of \cref{vf-strong<=>lb(A)-nonempty}.
	\begin{cor}\label{vf-strong<=>lb-injective}
		Under the conditions of \cref{inv-pres-over-Z_2}, the unital invertibility preserver $\vf$ is strong if and only if the endomorphism $\lb$ is injective. 
	\end{cor}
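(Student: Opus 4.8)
The plan is to mirror the proof of \cref{vf-strong<=>lb(A)-nonempty}, the essential difference being that over $\Z_2$ the map $\lb$ is only an endomorphism of the abelian group $(\cP(X),\tr)$ by \cref{lb-prese-symm-diff}, not a homomorphism of Boolean algebras, so I cannot appeal to \cite[Lemma 5.3]{Monk}. Instead I would first record the elementary group-theoretic fact that a group endomorphism is injective precisely when its kernel is trivial: since $\lb(\emptyset)=\emptyset$ is the identity of $(\cP(X),\tr)$, the endomorphism $\lb$ is injective if and only if $\lb(A)=\emptyset\impl A=\emptyset$ for all $A\in\cP(X)$. I would also note that, because $\lb(X)=X$ and the complement of $A\sst X$ equals $X\tr A$, the map $\lb$ still preserves complements, $\lb(X\sm A)=\lb(X)\tr\lb(A)=X\sm\lb(A)$, even though it need not preserve unions or intersections.

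For the ``only if'' part I would assume that $\vf$ is strong and that $\lb(A)=\emptyset$ for some $A\sst X$. Since $L_1(e_A)=A$, formula \cref{vf(f)=e_lb(L_1(x))+psi(f)} gives $\vf(e_A)_D=e_{\lb(A)}=0$, so $\vf(e_A)\in J(FI(X,\Z_2))$. Consequently $\vf(e_{X\sm A})=\vf(\dl+e_A)=\dl+\vf(e_A)$ has diagonal part $\dl$ and therefore lies in $U(FI(X,\Z_2))$. Strongness of $\vf$ then forces $e_{X\sm A}\in U(FI(X,\Z_2))$, which means $X\sm A=X$, i.e. $A=\emptyset$. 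By the kernel criterion above, $\lb$ is injective.

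For the ``if'' part I would assume that $\lb$ is injective and take $\af$ with $\vf(\af)\in U(FI(X,\Z_2))$. Then $\vf(\af)_D=\dl$, so \cref{vf(f)=e_lb(L_1(x))+psi(f)} yields $e_{\lb(L_1(\af))}=e_X$, i.e. $\lb(L_1(\af))=X=\lb(X)$. Injectivity of $\lb$ then gives $L_1(\af)=X$, whence $\af\in U(FI(X,\Z_2))$, so $\vf$ reflects invertibility and is strong. (Alternatively, one can follow \cref{vf-strong<=>lb(A)-nonempty} more literally and compute $\lb(L_0(\af))=\lb(X\tr L_1(\af))=\lb(X)\tr\lb(L_1(\af))=X\tr X=\emptyset$, concluding $L_0(\af)=\emptyset$ from the kernel criterion.)

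I do not anticipate a genuine obstacle, since the whole argument is a direct adaptation of \cref{vf-strong<=>lb(A)-nonempty}. The only point demanding care is precisely that $\lb$ is merely a group endomorphism: the Boolean-algebraic facts used in the $|K|>2$ case must be replaced by their group-theoretic counterparts, namely the characterization of injectivity through a trivial kernel and the expression of set complements via the symmetric difference together with $\lb(X)=X$.
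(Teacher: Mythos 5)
Your proposal is correct and follows essentially the same route as the paper: the ``only if'' part mirrors the argument of \cref{vf-strong<=>lb(A)-nonempty} with the trivial-kernel criterion for group endomorphisms of $(\cP(X),\tr)$ replacing \cite[Lemma 5.3]{Monk} (precisely the substitution the paper makes), and the ``if'' part is the paper's computation $e_{\lb(L_1(\af))}=\vf(\af)_D=\dl$, hence $\lb(L_1(\af))=X=\lb(X)$ and $L_1(\af)=X$ by injectivity.
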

	\begin{proof}
		The ``only if'' part is proved the same way as the ``only if'' part of \cref{vf-strong<=>lb(A)-nonempty} with the unique difference that we do not need \cite[Lemma 5.3]{Monk} anymore to justify \cref{lb(A)-empty=>A-empty}. For the ``if'' part assume the injectivity of $\lb$. If $\vf(\af)\in U(FI(X,\Z_2))$, then $e_{\lb(L_1(\af))}=\vf(\af)_D=\dl$, so $\lb(L_1(\af))=X$. Since $\lb(X)=X$ and $\lb$ is injective, then $L_1(\af)=X$, i.e. $\af\in U(FI(X,\Z_2))$.
	\end{proof}
	
	The following corollary is proved similarly to \cref{bij-strong-|K|>2}.
	\begin{cor}\label{bij-strong-over-Z_2}
		Under the conditions of \cref{inv-pres-over-Z_2}, the unital invertibility preserver $\vf$ is surjective (resp. bijective) and strong if and only if
		\begin{enumerate}
			\item\label{lb-automorphism} the associated map $\lb$ is an automorphism of the abelian group $(\cP(X),\tr)$,
			\item\label{psi-bijection-of-J} $\psi$ maps (resp. maps bijectively) $J(FI(X,\Z_2))$ onto $J(FI(X,\Z_2))$.
		\end{enumerate}
	\end{cor}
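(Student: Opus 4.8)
The plan is to transcribe the proof of \cref{bij-strong-|K|>2}, with two adjustments forced by working over $\Z_2$: the Boolean automorphism of $\cP(X)$ is replaced by a group automorphism of $(\cP(X),\tr)$, and all diagonal information is carried by the single set $L_1(\cdot)$, since $\af_D=e_{L_1(\af)}$ for every $\af$. Throughout I would invoke \cref{vf-strong<=>lb-injective} to trade the hypothesis (and conclusion) ``$\vf$ strong'' for ``$\lb$ injective'', so that strongness never has to be argued directly in either direction.

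For the forward implication, suppose $\vf$ is surjective (resp.\ bijective) and strong, so that $\lb$ is injective. To upgrade $\lb$ to an automorphism, I would take $B\in\cP(X)$, use surjectivity of $\vf$ to find $\af$ with $\vf(\af)=e_B$, and read off diagonals in \cref{vf(f)=e_lb(L_1(x))+psi(f)}: from $e_{\lb(L_1(\af))}=\vf(\af)_D=e_B$ I get $\lb(L_1(\af))=B$, so $\lb$ is onto and hence an automorphism of $(\cP(X),\tr)$, giving \cref{lb-automorphism}. For \cref{psi-bijection-of-J} the key observation is that $\af\in J(FI(X,\Z_2))$ forces $L_1(\af)=\emptyset$, whence $\vf(\af)=\psi(\af)$; thus $\psi$ and $\vf$ agree on the radical, and since $\vf(J(FI(X,\Z_2)))\sst J(FI(X,\Z_2))$ by \cref{vf-maps-J-to-J}, the (bijective) surjectivity of $\psi$ on $J(FI(X,\Z_2))$ follows from that of $\vf$ once one checks that any preimage of a radical element is radical---its $L_1$ being empty because $\lb$ is injective.

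For the converse I would assume \cref{lb-automorphism,psi-bijection-of-J}. Injectivity of $\lb$ yields strongness via \cref{vf-strong<=>lb-injective}. To prove $\vf$ surjective I would decompose a target $\bt=\bt_D+\bt_J$ with $\bt_D=e_{L_1(\bt)}$, put $\af_D=e_{\lb\m(L_1(\bt))}$ so that $e_{\lb(L_1(\af))}=\bt_D$, and then solve $\psi(\af_J)=\bt_J-\psi(\af_D)$ within $J(FI(X,\Z_2))$ using the surjectivity clause of \cref{psi-bijection-of-J}; then $\af=\af_D+\af_J$ satisfies $\vf(\af)=\bt$. In the bijective case, $\vf(\af)=0$ forces $\lb(L_1(\af))=\emptyset$ on the diagonal, hence $L_1(\af)=\emptyset$ and $\af\in J(FI(X,\Z_2))$, so $\psi(\af)=0$ and injectivity of $\psi$ on the radical gives $\af=0$.

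The argument is almost entirely a routine port of \cref{bij-strong-|K|>2}, and I expect the only delicate point to be the interplay between strongness and surjectivity in \cref{psi-bijection-of-J}: one must use injectivity of $\lb$ precisely to guarantee that a $\vf$-preimage of a radical element again lies in the radical, without which $\psi$ would not be pinned down as a self-map of $J(FI(X,\Z_2))$.
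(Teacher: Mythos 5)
Your proposal is correct and is essentially the paper's proof: the paper only remarks that \cref{bij-strong-over-Z_2} ``is proved similarly to \cref{bij-strong-|K|>2}'', and your argument is precisely that port, with the right $\Z_2$-specific simplifications ($\af_D=e_{L_1(\af)}$, $\lb(\emptyset)=\emptyset$ so $\vf|_{J}=\psi|_{J}$) and the right use of \cref{vf-strong<=>lb-injective} in place of \cref{vf-strong<=>lb(A)-nonempty}. You also correctly isolate the one delicate point --- injectivity of $\lb$ is what forces $\vf$-preimages of radical elements to be radical --- which is exactly how the surjective/bijective dichotomy of \cref{surj-strong-|K|>2} carries over.
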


			\section*{Acknowledgements}
			Jorge J. Garcés was partially supported by grant PID2021-122126NB-C31 funded by MCIN/AEI/10.13039/501100011033 and by ERDF/EU and Junta de Andalucía grant FQM375. Mykola Khrypchenko was partially supported by CMUP, member of LASI, which is financed by national funds through FCT --- Fundação para a Ciência e a Tecnologia, I.P., under the project with reference UIDB/00144/2020. The authors are grateful to Professor Matej Bre\v{s}ar for providing them with a pdf of the paper~\cite{BresarSemrl99}.
			
			%
			
			\bibliography{bibl}{}
			\bibliographystyle{acm}
			
		\end{document}